\DeclareOldFontCommand{\rm}{\normalfont\rmfamily}{\mathrm}
\DeclareOldFontCommand{\sf}{\normalfont\sffamily}{\mathsf}
\DeclareOldFontCommand{\tt}{\normalfont\ttfamily}{\mathtt}
\DeclareOldFontCommand{\bf}{\normalfont\bfseries}{\mathbf}
\DeclareOldFontCommand{\it}{\normalfont\itshape}{\mathit}
\DeclareOldFontCommand{\sl}{\normalfont\slshape}{\@nomath\sl}
\DeclareOldFontCommand{\sc}{\normalfont\scshape}{\@nomath\sc}
\def\C{\mathbb{C}}
\def\E{\mathbb{E}}
\def\N{\mathbb{N}}
\def\P{\mathbb{P}}
\def\R{\mathbb{R}}
\def\Z{\mathbb{Z}}
\def\1{\mathds{1}}
\def\f{f_Y^{(\Delta)}}
\def\pa2{\frac{\partial ^2}{\partial \vartheta ^2}}
\def\sc{\textsc}
\def\bf{\textbf}
\theoremstyle{plain}
\newtheorem{theorem}{Theorem}[section]
\newtheorem{proposition}[theorem]{Proposition}
\newtheorem{definition}[theorem]{Definition}
\newtheorem{corollary}[theorem]{Corollary}
\newtheorem{assumptionletter}{{\textbf{Assumption}}}
\theoremstyle{definition}
\newtheorem{example}[theorem]{Example}
\newtheorem{remark}[theorem]{Remark}
\numberwithin{equation}{section}
\renewcommand{\labelenumi}{(\roman{enumi})}
\title{A note on estimation  of $\alpha$-stable\\[2mm] CARMA processes sampled\\[2mm] at  low frequencies}
\author{Vicky Fasen-Hartmann \setcounter{footnote}{1}\thanks{Institute of Stochastics, Englerstra{\ss}e 2,
D-76131 Karlsruhe, Germany. \emph{Email:}
\href{mailto:vicky.fasen@kit.edu}{vicky.fasen@kit.edu}}
\and Celeste Mayer \setcounter{footnote}{0}\thanks{Institute of Stochastics, Englerstra{\ss}e 2,
D-76131 Karlsruhe, Germany. \emph{Email:}
\href{mailto:celeste.mayer@kit.edu}{celeste.mayer@kit.edu}}}
\date{}
\begin{document}
%
\maketitle
%
\begin{abstract}
In this paper, we investigate estimators for symmetric $\alpha$-stable CARMA processes sampled equidistantly. Simulation studies suggest
that the Whittle estimator and the estimator presented in \cite{Garcia:Klu:Mueller:2011} are consistent estimators for the
parameters  of stable CARMA processes.
For CARMA processes with finite second moments it is well-known that the Whittle estimator is consistent and asymptotically normally distributed. Therefore, in the light-tailed setting the properties of the Whittle estimator for CARMA processes are similar to those of the Whittle estimator for ARMA processes. However, in the present paper
we prove that, in
general, the Whittle estimator for symmetric $\alpha$-stable CARMA processes sampled at low frequencies is  not consistent and highlight why simulation studies suggest
something else. Thus, in contrast to the light-tailed setting the properties of the Whittle estimator for heavy-tailed ARMA processes can not be transferred to heavy-tailed CARMA processes.
We elaborate as well that
the estimator presented in \cite{Garcia:Klu:Mueller:2011} faces the same problems.
However, the Whittle estimator for stable CAR(1) processes is consistent.

\end{abstract}


\noindent
\begin{tabbing}
\emph{AMS Subject Classification 2010: }\=Primary:  62M86,  62F12, 62M10
\\ \> Secondary:  60G10, 62F10
\end{tabbing}

\vspace{0.2cm}\noindent\emph{Keywords:} autocovariance function, CARMA process, consistency,   periodogram, Ornstein-Uhlenbeck process,
stable Lévy process, state space model, Whittle estimator

	\section{Introduction}

Continuous-time ARMA (CARMA) processes are the continuous-time versions of the well-known ARMA processes
in discrete time. Since CARMA processes with finite second moments sampled equidistantly are weak ARMA processes,
several methods to estimate the parameters of ARMA processes as, e.g., quasi-maximum likelihood estimators or Whittle
estimators,
can be utilized to estimate the parameters of CARMA processes. A challenge is that the distribution of the  white noise
of the weak ARMA representation
is highly dependent on the model parameters of the CARMA process and only a weak white noise instead of a strong white noise. Moreover, some attention has to be paid
to guarantee the model identifiability.  These generalisations are technical and take some effort.
 \cite{QMLE} derived a rigorous theory
for quasi maximum-likelihood estimation for equidistantly sampled multivariate CARMA processes with finite second moments; see as well
  \cite{Brockwell:Lindner:2019} and \cite{brockwelldavisyang} for CARMA processes. The
method itself is much older and applied in several disciplines, in particular, in economics, even though the statistical inference
of that estimator was as far as we know not investigated earlier. Since this procedure essentially depends on the variance
of the sampled white noise, it is unsuitable for estimating the parameters of heavy-tailed CARMA processes. The consistency and the asymptotic normality of the Whittle
estimator for multivariate CARMA processes with finite second moments were recently derived  in \cite{Fasen:Mayer:2020}.
The limit covariance matrices of these estimators are slightly different to those
of the corresponding  ARMA estimators. Apart from that, the structure of the estimator is the same taking
the identifiability assumptions into account.

The topic of this paper is the estimation of symmetric $\alpha$-stable CARMA processes with infinite second moments sampled
at low frequencies. To the best of our knowledge there exist no estimators for heavy-tailed CARMA processes in the literature yet. Only
stable Ornstein-Uhlenbeck processes, which correspond to the class of CARMA(1,0) processes, are investigated, e.g., in \cite{Hu:Long:2007,Hu:Long:2009},
\cite{Fasen:2013} and \cite{Ljungdahl:Podolskij}.
 \cite{Garcia:Klu:Mueller:2011} proposed an indirect quasi-maximum likelihood method for stable CARMA processes. Their simulation study suggests
 that the estimator is converging  when the number of observations tends to infinity, however they do not present a mathematical analysis
 of their estimator. In this paper
 we mainly investigate the Whittle estimator in more detail.
 The Whittle estimator was first introduced by \cite{whittle1953estimation} to estimate
 the parameters of Gaussian ARMA processes and
 further explored in \cite{Hannan73},
 see the monograph of \cite{Brockwell:Davis:1991}. The more general case of Whittle estimation of multivariate ARMA processes
 with finite second moments was topic of \cite{DunsmuirHannan76}. \cite{Mikosch:Gardrich:Klueppelberg:1995} observed that
 the same method also works for symmetric $\alpha$-stable ARMA processes with infinite second moments.
 This is not apparent since the spectral density is not defined for processes with
 infinite second moments and the Whittle estimator
 is based on its empirical version, the periodogram.
 \cite{Mikosch:Gardrich:Klueppelberg:1995} derived the consistency and the convergence of the properly
 normalized and standardized Whittle estimator
 to a functional of stable random variables. Therefore, Whittle estimation is an estimation method for both heavy-tailed and light-tailed ARMA processes.
 The estimator is consistent in both settings which is a very important
 property. Indeed, the convergence rate of the Whittle estimator
 is $n^{1/\alpha}$  in the stable case which is even faster than in the case with finite second moments,
 where it is $n^{1/2}$. It would thus be desirable to have the same for heavy and light-tailed
 CARMA processes.

Similarly, for heavy-tailed fractional ARIMA processes, which exhibit
 long range dependence, the Whittle estimator is consistent and the asymptotic behaviour is known (see  \cite{Kokoszka:Taqqu:1996}).
The Whittle estimator was also used for parameter estimation of GARCH processes in \cite{Giraitis:Robinson:2001}
and \cite{Mikosch:Straumann:2002}).
For GARCH(1,1) processes \cite{Mikosch:Straumann:2002} pointed out that the Whittle estimator is
consistent as long as the 4th moment is finite and inconsistent when the 4th moment is infinite.
This is a very interesting statement to which we come back later.

Therefore, the behaviour of the Whittle estimator for  heavy-tailed ARMA
 models and the fact that the Whittle estimator also works for light-tailed
CARMA processes (see \cite{Fasen:Mayer:2020})
 might suggest that the Whittle estimator is suitable for parameter estimation of  heavy-tailed CARMA processes.
 In particular, there exist simulation studies which confirm this idea.
However, the main statement of the present paper is that the Whittle estimator for symmetric $\alpha$-stable CARMA processes
is in general not a consistent estimator, even though there are simulation experiments which suggest something else.
There is also evidence that the estimator of \cite{Garcia:Klu:Mueller:2011} is not consistent as well. We elaborate that in more detail and present several arguments
for our conjecture.

The paper is structured in the following way. We start with an introduction on symmetric $\alpha$-stable CARMA processes in \Cref{Sec2} and present some basic facts on Whittle estimation for  CARMA processes with finite second moments  in \Cref{sec:Section:3} which motivates our approach. Then, in  \Cref{Sec4}, we show the convergence of the Whittle function
for symmetric $\alpha$-stable CARMA processes and deduce that the Whittle estimator is not consistent for general symmetric $\alpha$-stable CARMA processes. However, we
show that the Whittle estimator  is a consistent estimator for symmetric $\alpha$-stable CAR(1) processes.
Finally, in \Cref{Sec5}, we demonstrate the theoretical results through a simulation study where we compare the performance of our estimator with the estimator proposed in \cite{Garcia:Klu:Mueller:2011}.
The behaviour of the Whittle estimator and the behaviour of the estimator of \cite{Garcia:Klu:Mueller:2011} is very similar in our simulation study.
In particular, in the simulation setup of  \cite{Garcia:Klu:Mueller:2011} the Whittle estimator performs excellent and leads to the presumption
that the Whittle estimator is converging.
Conclusions are given in \Cref{Conclusion}.
 Some auxiliary results on the asymptotic behaviour of the sample autocovariance function of symmetric $\alpha$-stable CARMA processes are postponed to the Appendix.
	

	\subsection*{Notation}
We write $\mathcal{B}(A)$ for  the Borel-$\sigma$-algebra of a set $A$. The space
 $L^p(A)$ for $A\in\mathcal{B}(A)$ and $0<p<\infty$ denotes the set of measurable functions $f:A\to\R$ which satisfy $\int_A|f(t)|^p\,dt<\infty$.
Furthermore, for some function $f:A\to\R$ we write $f^+$ for the positive part $f^+(t)=\max\{0,f(t)\}$ and $f^-$ for the negative part $f^-(t)=\max\{0,-f(t)\}$.
The function $\Gamma$ describes the Gamma function. The indicator function of a set $A$ is denoted by $\1_A$ and the signum function by $\sign(z)=\1_{\{z>0\}}-\1_{\{z<0\}}$.
For the real and imaginary part of a complex valued $z$ we use the notation $\Re(z)$ and $\Im(z)$, respectively.
The $N$-dimensional identity matrix is denoted as $I_N$.	For some matrix $A$, we write
 $A^\top$ for its transpose. For the gradient operator we use the notation $\nabla$ and we write consequently $\nabla_\vartheta g(\vartheta_0)$ as shorthand for $[\frac{\partial}{\partial \vartheta_1}g(\vartheta),\frac{\partial}{\partial \vartheta_2}g(\vartheta),\ldots]^{\top}|_{{\vartheta=\vartheta_0}}$ for any vector-valued function $g$.
The symbols $\overset{a.s.}{\longrightarrow}$, $\overset{\P}{\longrightarrow}$ and $\overset{\mathcal D}{\longrightarrow}$
denote almost sure convergence, convergence in probability and convergence in distribution, respectively.

\section{Symmetric $\alpha$-stable CARMA processes}\label{Sec2}

In this paper, we consider a CARMA process driven by a symmetric $\al$-stable L\'evy process.

\begin{definition}
A random variable $Z$ is called $\alpha$-stable distributed, $\alpha\in \left(0,2\right]$, if $Z$ has the characteristic function $\E(\e^{izZ})=\exp(\varphi_Z(z))$
with
$$\varphi_Z(z)=\begin{cases}
    -\sigma^{\alpha}|z|^{\alpha}(1-i\beta(\operatorname{sign}(z))\tan\left(\pi \frac{\alpha}{2}\right)+i\mu z, &\text{for $\alpha \neq 1$,}\\
    -\sigma|z|(1+i\beta(\operatorname{sign}(z))\log|z|\left(\frac{2}{\pi}\right)+i\mu z,  &\text{for $\alpha=1$,}\\
\end{cases}
\qquad z\in\R,$$
and $\beta \in [-1,1],\sigma>0$ and $\mu \in \R$.  The parameter $\alpha$ is the index of stability.
We write $Z\sim S_{\alpha}(\sigma,\beta,\mu)$.
\end{definition}
In the case $\beta=0$ and $\mu=0$, the random variable $Z$ is symmetric. For $\alpha=2$ we get a normally distributed random variable.
Furthermore, for $\alpha\in(0,2)$ not all moments of $Z$ exist. To be more precise
\begin{align}
			\E|Z|^p<\infty \quad \text{ for } 0<p<\alpha \quad\text{ and } \quad \E|Z|^p=\infty \quad \text{ for } p\geq \alpha.
\end{align}
Important for us is likewise that in the symmetric case
\beao
    \lim_{n\to\infty} n\P(|Z|>n^{1/\alpha})=C_{\alpha}{\sigma ^\alpha}
\eeao
with
\beam \label{C_alpha}
    C_{\alpha}=\left\{\begin{array}{cl}
        \frac{1-\alpha}{\Gamma(2-\alpha)\cos(\pi\alpha/2)},  & \text{ if } \alpha\not=1\\
        \frac{2}{\pi}, & \text{ if } \alpha=1
    \end{array}\right.,
\eeam
see Property 1.2.15 of \cite{Samrodonitsky:Taqqu:1994} where
more details on stable distributions can be found as well.

A one-sided Lévy process $(L_t)_{t \geq 0}$ is a stochastic process with stationary and independent increments satisfying $L_0=0$ almost surely
and having continuous in probability sample paths; see the book of \cite{Sato} on Lévy processes.
A two-sided Lévy process $L=(L_t)_{t\in \R}$ can be constructed from two independent one-sided Lévy processes $(L^{[1]}_t)_{t\geq 0}$ and $(L^{[2]}_t)_{t\geq 0}$
through
$$L_t=L_{t}^{[1]}\mathbf{1}_{ \{t\geq 0\}}-\lim_{s\uparrow -t}L^{[2]}_{s}\mathbf{1}_{\{t<0\}}, \quad t \in \R.$$
Then, an  $\alpha$-stable Lévy process $L^{(\alpha)}=(L_t^{(\alpha)})_{t\in\R}$ is a Lévy process with
increments
$$L_t^{(\alpha)}-L_s^{(\alpha)}\sim S_\alpha(\sigma(t-s)^{1/\alpha},\beta,\mu),\quad s<t,$$
and it is symmetric, if additionally $\beta=\mu=0$.

In the following, we consider  a parametric family of stationary  CARMA processes. Let \linebreak $\Theta\subseteq \R^{N(\Theta)}$, $N(\Theta)\in\N$, be a parameter space, $p\in\N$ be fixed and for any
$\vartheta\in\Theta$ let  $a_1(\vartheta),\,\ldots,\,a_p(\vartheta),$ \linebreak $\,c_0(\vartheta),\,\ldots,\,c_{p-1}(\vartheta)\in\R,\,a_p(\vartheta)\neq 0$ and   $c_j(\vartheta)\not=0$ for some $j\in\{0,\ldots,p-1\}$.
Furthermore, define
\begin{eqnarray*} \label{A}
 A(\vartheta)&:=&
 \begin{pmatrix}
  0      & 1            & 0            & \ldots & 0            \\
  0      & 0            & 1            & \ddots & \vdots       \\
  \vdots & \vdots       & \ddots       & \ddots & 0            \\
  0      & 0            & \ldots       & 0      & 1            \\
  -a_p(\vartheta)   & -a_{p-1}(\vartheta)     & \ldots       & \ldots & -a_1(\vartheta)
 \end{pmatrix}\in \R^{p\times p},
 \end{eqnarray*}
 \begin{eqnarray*}
 q(\vartheta)&=&\inf\{j\in\{0,\ldots,p-1\}:\,c_l(\vartheta)=0\,\,\forall\, l>j\} \quad \text{ with }\quad \inf\emptyset:=p-1,\\
 c(\vartheta)&:=&(c_{q(\vartheta)}(\vartheta),\,c_{q(\vartheta)-1}(\vartheta),\,\ldots,\,c_{0}(\vartheta),0,\ldots,0)^\top\in\R^p.
\end{eqnarray*}
Throughout the paper we assume that the eigenvalues of $A(\vartheta)$ have strictly negative real parts for $\vartheta \in \Theta$.
The family of   CARMA processes $Y(\vartheta)=(Y_t(\vartheta))_{t\geq 0}$ is then defined  via the controller canonical state space representation: Let $(L_t)_{t\geq 0}$
be a  Lévy process  and
let $(X_t(\vartheta))_{t\geq 0}$ be a strictly stationary solution to the stochastic differential equation
\begin{subequations} \label{Equation 1.1} \label{2.1}
 \begin{equation} \label{Equation 1.1a}
  {\rm d}X_t(\vartheta) = A(\vartheta) X_t(\vartheta)\,{\rm d}t + e_p\,{\rm d}L_t,\quad t\geq 0,
 \end{equation}
 where $e_p$ denotes the $p$-th unit vector in $\R^p$. Then the process
 \begin{equation} \label{Equation 1.1b}
  Y_t(\vartheta) := c(\vartheta)^\top X_t(\vartheta),\quad t\geq 0,
 \end{equation}
\end{subequations}
 is called \textit{CARMA process} of order $(p,\,q(\vartheta))$. If the Lévy process is $\alpha$-stable then $Y(\vartheta)$ is an \textit{$\alpha$-stable CARMA process}.
 Necessary and sufficient conditions for the existence of strictly stationary CARMA processes are given in \cite{Brockwell:Lindner:2009}.
  In particular, the assumptions are satisfied for $\alpha$-stable Lévy processes and Lévy processes with finite second moments.
  A CARMA process can be interpreted as the stationary solution to the formal $p$-th order stochastic differential equation
\[
 a_\vartheta({\rm D})Y_t(\vartheta)=c_\vartheta({\rm D}){\rm D}L_t,\quad t\geq 0,
\]
where ${\rm D}$ denotes the differential operator with respect to $t$ and
\[
 a_\vartheta(z):= z^p+a_1(\vartheta)z^{p-1}+\ldots+a_p(\vartheta)\qquad\text{and}\qquad c_\vartheta(z):=c_0(\vartheta)z^q+c_1(\vartheta)z^{q-1}+\ldots+c_{q(\vartheta)}(\vartheta)
\]
are the autoregressive and the moving average polynomials, respectively. Hence, CARMA processes can be seen
 as the continuous-time analogue of discrete-time ARMA processes.
Due to \eqref{Equation 1.1} the CARMA process $Y(\vartheta)$ also has the moving average (MA) representation
\beam \label{kernel}
    Y_t(\vartheta)=\int_{-\infty}^{\infty}g_\vartheta(t-s)\,dL_s^{(\alpha)}, \quad t\geq 0, \quad \text{ with }
        \quad g_\vartheta(t)=c(\vartheta)^\top\e^{A(\vartheta) t}e_p\1_{\left[0,\infty\right)}(t),
\eeam
which plays an important role in this paper.

\section{Whittle estimation for CARMA processes with finite second moments} \label{sec:Section:3}

Before we present the results for Whittle estimation of discrete-time sampled symmetric $\alpha$-stable CARMA processes
$Y=(Y_t)_{t\geq 0}:=(Y_t(\vartheta_0))_{t\geq 0}:=Y(\vartheta_0)$,
$\vartheta_0\in\Theta$, with observations  $Y_\Delta,\ldots,Y_{n\Delta}$ ($\Delta>0$ fixed), we repeat
the basic results on Whittle estimation of CARMA processes with finite second moments which motivates our approach.
This includes, in particular, Brownian motion driven CARMA processes which are $2$-stable.
For a CARMA process $Y(\vartheta)$ driven by a Lévy process $L$ with $\E(L_1)=0$ and $0<\E(L_1^2)=\sigma_L^2<\infty$ the spectral density is
\beao
    f_Y(\omega,\vartheta)=\frac{\sigma_L^2}{2\pi}\frac{|c_\vartheta(i\omega)|^2}{|a_\vartheta(i\omega)|^2}, \quad \omega \in\R,
\eeao
and for the discrete-time sampled process $Y^{(\Delta)}(\vartheta):=(Y_{k\Delta}(\vartheta))_{k\in\N}$ it is
\beao
    f_Y^{(\Delta)}(\omega,\vartheta)&=&\frac{\sigma_L^2}{2\pi}\int_0^\Delta\left|\sum_{j=-\infty}^\infty g_\vartheta(u+j\Delta)\e^{i\omega j}\right|^2\,du\\
        &=&\frac{\sigma_L^2}{2\pi}\int_0^{\Delta}\left|c(\vartheta)^\top\e^{A(\vartheta)u}(I_p-\e^{A(\vartheta)\Delta+i\omega I_p})^{-1}e_p\right|^2\,du, \quad \omega\in[-\pi,\pi],
\eeao
see \cite{fasen2013a}, Example 2.4.
The empirical version of the spectral density is
	the periodogram $I_n:[-\pi,\pi]\rightarrow \R$ defined as
	\begin{eqnarray}\label{Per_ACVF}
	I_n(\omega)=\frac{1}{2\pi n}\left|\sum_{k=1}^{n}Y_{k}^{(\Delta)}e^{ik\omega}\right|^2
	=\frac{1}{2\pi}\sum_{h=-n+1}^{n-1}\overline{\gamma}_n(h)e^{-ih\omega}, \quad \omega \in [-\pi, \pi],
    \end{eqnarray}	
    where 	
	\beam \label{ACF}
        \overline{\gamma}_n(h):=\frac1n\sum_{k=1}^{n-h}Y^{(\Delta)}_{k+h}Y^{(\Delta)}_{k}=:\overline{\gamma}_n(-h)\quad \text{ for }h=0,\ldots,n-1,
    \eeam
    is the empirical autocovariance function. The Whittle estimator is based on the distance between the spectral density and the periodogram.
	To be more precise, the  Whittle estimator $\widehat\vartheta_n$ is the minimizer of the  Whittle function
    $$W_n(\vartheta):=\frac{1}{2n}\sum_{j=-n+1}^{n}\Big(f_Y^{(\Delta)}(\omega_j, \vartheta)^{-1}I_n(\omega_j)+\log \left(f_Y^{(\Delta)}(\omega_j,\vartheta)\right)\Big), \quad \vartheta\in\Theta,$$
	with
	$\omega_j=\frac{\pi j}{n}$ for $ j=-n+1,\ldots,n.$
    \cite{Fasen:Mayer:2020}, Theorem 1, show that under very general assumptions the Whittle estimator is  consistent
    and asymptotically normally distributed.

Since the spectral density depends on the variance $\sigma_L^2$ of the driving Lévy process, \cite{Fasen:Mayer:2020} present an adjusted version
of the Whittle estimator which is independent of the variance parameter $\sigma_L^2$. However, to derive this estimator
some background on the Kalman filter is required.

 The Kalman filter is going back to \cite{kalman1960}
and is well investigated in control theory (see  \cite{Chui:Chen:2009}) and time series analysis  (see \cite{Brockwell:Davis:1991});
in the context of CARMA processes we refer to \cite{QMLE}.
Therefore, note that the Riccati equation
\begin{eqnarray*}
\Omega^{(\Delta)}(\vartheta)&=&e^{A(\vartheta)\Delta}\Omega^{(\Delta)}(\vartheta) \left(e^{A(\vartheta)\Delta}\right)^\top + \int_0^{\Delta}e^{A(\vartheta)u}e_pe_p^\top \e^{A(\vartheta)^\top u}du\\&&- \left( e^{A(\vartheta)\Delta}\Omega^{(\Delta)} (\vartheta)c(\vartheta)\right)\left( c(\vartheta)^\top\Omega^{(\Delta)}(\vartheta) c(\vartheta)\right)^{-1}\left( e^{A(\vartheta)\Delta}\Omega^{(\Delta)}(\vartheta) c(\vartheta)\right)^\top
\end{eqnarray*} has an unique positive semidefinite solution $\Omega^{(\Delta)}(\vartheta)$
such that the Kalman gain matrix
$$K^{(\Delta)}(\vartheta)=\left( e^{A(\vartheta)\Delta}\Omega^{(\Delta)}(\vartheta) c(\vartheta)\right)\left( c(\vartheta)^\top\Omega^{(\Delta)}(\vartheta) c(\vartheta)\right)^{-1}$$
is well-defined.
Define the polynomial $\Pi^{(\Delta)}$  as
\begin{eqnarray*} \label{Pi:rep}	
\Pi^{(\Delta)}(z,\vartheta):=\left(1 -c(\vartheta)^\top\left(I_{N}-(e^{A(\vartheta)\Delta}-K^{(\Delta)}(\vartheta)c(\vartheta)^\top)z\right)^{-1}K^{(\Delta)}(\vartheta)z\right), \quad z\in\C,
\end{eqnarray*}
and its inverse
\begin{eqnarray*}
    \Pi^{(\Delta)}(z,\vartheta)^{-1}:=1+c(\vartheta)^\top\sum_{j=1}^{\infty}\left(e^{A(\vartheta)\Delta}\right)^{j-1}K^{(\Delta)}(\vartheta)z^j, \quad z\in\C,
\end{eqnarray*}
which gives $\Pi^{(\Delta)}(z,\vartheta)^{-1}\Pi^{(\Delta)}(z,\vartheta)=z$.
If the driving Lévy process $L$ has finite second moments and mean zero, then
\begin{eqnarray} \label{linearInnovations}
\varepsilon_{k}^{(\Delta)}(\vartheta)=\Pi^{(\Delta)}(\mathsf{ B},\vartheta)Y^{(\Delta)}_k(\vartheta), \quad k \in \N,			
\end{eqnarray}
is a sequence of uncorrelated identically distributed random variables. They are the noise of the
 ARMA representation of the discretely sampled CARMA process $Y^{(\Delta)}(\vartheta)$ (cf.  \cite{QMLE}).
Define
\beam \label{variance:linearinnovations}
    \sigma^2_{\varepsilon^{(\Delta)}}(\vartheta):=\sigma_L^2c(\vartheta)^{\top}\Omega^{(\Delta)}(\vartheta)c(\vartheta).
\eeam
Then
$\E(\varepsilon_{k}^{(\Delta)}(\vartheta)^2)= \sigma^2_{\varepsilon^{(\Delta)}}(\vartheta)$ is the variance of the white noise.
By an application of  \cite{Brockwell:Davis:1991}, Theorem 11.8.3, the spectral density of $Y^{(\Delta)}(\vartheta)$ has
the representation
    \begin{eqnarray}
    	\label{specY}
		\f(\omega,\vartheta)=\frac{|\Pi^{(\Delta)}(e^{i\omega},\vartheta)^{-1}|^2 \sigma^2_{\varepsilon^{(\Delta)}}(\vartheta)}{2\pi},
            \quad  \omega \in [-\pi,\pi].
	\end{eqnarray}
Note that $\Pi^{(\Delta)}$ and hence, $\Pi^{(\Delta)\,-1}$ do not depend on the variance $\sigma_L^2$ of
the driving Lévy process.
This motivates the definition of the (adjusted) Whittle function
\begin{align*}W^{(A)}_n(\vartheta)&:=\frac{\pi}{n}\sum_{j=-n+1}^{n}|\Pi^{(\Delta)}(e^{i\omega_j},\vartheta)|^{2}I_n(\omega_j)
\end{align*}
and the (adjusted) Whittle estimator
\beam \label{adjusted Whittle}
    \widehat\vartheta^{(\Delta,A)}_n:=\arg\min_{\vartheta \in \Theta}W^{(A)}_n(\vartheta).
\eeam
The adjusted Whittle estimator has desirable properties in the light-tailed setting as derived
in \cite{Fasen:Mayer:2020}.
To present these results we need some further assumptions.

\setcounter{assumptionletter}{0}
\begin{assumptionletter}\strut
         \label{as_D}
         \renewcommand{\theenumi}{(A\arabic{enumi})}
         \renewcommand{\labelenumi}{\theenumi}
         \begin{enumerate}
                 \item \label{as_D1} The parameter space $\Theta$ is a compact subset of $\R^{N(\Theta)}$.
                 \item \label{as_D8} The true parameter $\vartheta_0$ is an element of the interior of $\Theta$.
                 \item \label{as_D3} The eigenvalues of $A(\vartheta)$ have strictly negative real parts for $\vartheta \in \Theta$.
                 \item \label{as_D5} For all $\vartheta \in \Theta$ the zeros of $c_\vartheta(z)=c_0(\vartheta)z^{q(\vartheta)}+c_1(\vartheta)z^{q(\vartheta)-1}+\ldots +c_{q(\vartheta)}$ are different from the eigenvalues of $A(\vartheta)$.
                 \item \label{as_D6} For any $\vartheta,\vartheta'\in\Theta$ we have $(c(\vartheta),A(\vartheta))\not=(c({\vartheta'}),A({\vartheta'}))$. 
                 \item \label{as_D7} For all $\vartheta \in \Theta$ the spectrum of $A(\vartheta)$ is a subset of $\{ z \in \C: -\frac{\pi}{\Delta} < \mathfrak I(z) < \frac{\pi}{\Delta} \}$.
                 \item  For any $\vartheta_1,\ \vartheta_2 \in \Theta$, $\vartheta_1\neq \vartheta_2$, there exists some $z\in\C$ with $|z|=1$ and $\Pi^{(\Delta)}(z,\vartheta_1)\neq \Pi^{(\Delta)}(z,\vartheta_2)$.
                 \item \label{as_D9} The maps $\vartheta\mapsto A(\vartheta)$ and $\vartheta\mapsto c(\vartheta)$ are three times continuously differentiable.
         \end{enumerate}
\end{assumptionletter}
Throughout the paper we suppose that Assumption A holds.

\begin{theorem}[\cite{Fasen:Mayer:2020}, Theorem 1 and Theorem 4]  \label{Theorem 3.1}
Let $L$ be a Lévy process with $\E(L_1)=0$ and $\E(L_1^2)<\infty$.
\begin{enumerate}
     \item Then, $\widehat\vartheta^{(\Delta,A)}_n\overset{a.s.}{\longrightarrow}\vartheta_0$ as $n\to\infty$.
    \item Suppose further $\E(L_1^4)<\infty$ and for any $c \in \C^{N(\Theta)}$ there exists an $\omega^* \in [-\pi,\pi]$ such that $\nabla_\vartheta |\Pi^{(\Delta)}(e^{i\omega^*},\vartheta_0)|^{-2}c \neq 0_{N(\Theta)}$. Then, there exists a positive definite matrix $\Sigma_{W^{(A)}}\in\R^{N(\Theta)\times N(\Theta)}$
          such that
          as $n\to \infty$, $$\sqrt{n}\left(\widehat{\vartheta}_n^{(\Delta,A)}-\vartheta_0\right)\overset{\mathcal D}{\longrightarrow}\mathcal{N} (0,\Sigma_{W^{(A)}}).$$
          If $L$ is a Brownian motion, then
          \beao
            \Sigma_{W^{(A)}}=4\pi \left[\int_{-\pi}^{\pi}\nabla_\vartheta \log\left(|\Pi^{(\Delta)}(e^{i\omega},\vartheta_0)|^{-2}\right)^\top\nabla_\vartheta \log\left(|\Pi^{(\Delta)}(e^{i\omega},\vartheta_0)|^{-2}\right) d\omega\right]^{-1}.
          \eeao
\end{enumerate}
\end{theorem}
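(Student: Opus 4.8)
The plan is to treat $\widehat\vartheta_n^{(\Delta,A)}$ as a classical M-estimator and to run the standard Whittle programme, adapted to the sampled CARMA setting, separately for the two assertions. For (i) I would first prove that the objective $W_n^{(A)}$ converges almost surely and uniformly in $\vartheta$ to a deterministic limit. Since $Y^{(\Delta)}$ is a stationary linear process with finite variance it is ergodic, so the sample autocovariances in \eqref{ACF} satisfy $\overline{\gamma}_n(h)\overset{a.s.}{\longrightarrow}\gamma(h)$ for every fixed $h$. Expressing $I_n$ through the $\overline{\gamma}_n(h)$ as in \eqref{Per_ACVF} and reading $\frac{\pi}{n}\sum_{j=-n+1}^{n}$ as a Riemann sum of mesh $\pi/n$ on $[-\pi,\pi]$, I would identify the limit
\[
 W^{(A)}(\vartheta)=\int_{-\pi}^{\pi}\big|\Pi^{(\Delta)}(e^{i\omega},\vartheta)\big|^{2}\,\f(\omega,\vartheta_0)\,d\omega
 =\frac{\sigma^2_{\varepsilon^{(\Delta)}}(\vartheta_0)}{2\pi}\int_{-\pi}^{\pi}\frac{\big|\Pi^{(\Delta)}(e^{i\omega},\vartheta)\big|^{2}}{\big|\Pi^{(\Delta)}(e^{i\omega},\vartheta_0)\big|^{2}}\,d\omega,
\]
where the second equality uses the representation \eqref{specY} of $\f(\cdot,\vartheta_0)$. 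Uniformity over the compact set $\Theta$ (Assumption \ref{as_D1}) follows from the continuity and differentiability in \ref{as_D9}, which make $\vartheta\mapsto|\Pi^{(\Delta)}(e^{i\omega},\vartheta)|^{2}$ equicontinuous.

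The decisive structural step is that $\vartheta_0$ is the unique minimizer of $W^{(A)}$. Here I would use that $\Pi^{(\Delta)}(\cdot,\vartheta)$ is the normalized innovations filter of the ARMA representation, with $\Pi^{(\Delta)}(0,\vartheta)=1$ and no zeros inside the closed unit disk, so that the Kolmogorov--Szeg\H{o} normalization $\frac{1}{2\pi}\int_{-\pi}^{\pi}\log|\Pi^{(\Delta)}(e^{i\omega},\vartheta)|^{2}\,d\omega=0$ holds for every $\vartheta$. Applying Jensen's inequality to the convex exponential then yields
\[
 \frac{1}{2\pi}\int_{-\pi}^{\pi}\frac{\big|\Pi^{(\Delta)}(e^{i\omega},\vartheta)\big|^{2}}{\big|\Pi^{(\Delta)}(e^{i\omega},\vartheta_0)\big|^{2}}\,d\omega
 \;\geq\;\exp\!\left(\frac{1}{2\pi}\int_{-\pi}^{\pi}\log\frac{\big|\Pi^{(\Delta)}(e^{i\omega},\vartheta)\big|^{2}}{\big|\Pi^{(\Delta)}(e^{i\omega},\vartheta_0)\big|^{2}}\,d\omega\right)=1,
\]
so that $W^{(A)}(\vartheta)\geq\sigma^2_{\varepsilon^{(\Delta)}}(\vartheta_0)=W^{(A)}(\vartheta_0)$, with equality exactly when $|\Pi^{(\Delta)}(e^{i\omega},\vartheta)|=|\Pi^{(\Delta)}(e^{i\omega},\vartheta_0)|$ for almost all $\omega$. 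The identifiability condition on the unit circle (that distinct parameters produce distinct $\Pi^{(\Delta)}$) forbids this for $\vartheta\neq\vartheta_0$, so the minimum is well separated, and uniform convergence to such a limit yields $\widehat\vartheta_n^{(\Delta,A)}\overset{a.s.}{\longrightarrow}\vartheta_0$ by the usual argmin argument.

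For (ii) I would exploit that $\vartheta_0$ lies in the interior of $\Theta$ (Assumption \ref{as_D8}), so $\nabla_\vartheta W_n^{(A)}(\widehat\vartheta_n^{(\Delta,A)})=0$, and expand by the mean value theorem around $\vartheta_0$,
\[
 \sqrt{n}\big(\widehat\vartheta_n^{(\Delta,A)}-\vartheta_0\big)=-\big[\nabla_\vartheta^{2}W_n^{(A)}(\tilde\vartheta_n)\big]^{-1}\,\sqrt{n}\,\nabla_\vartheta W_n^{(A)}(\vartheta_0),
\]
with $\tilde\vartheta_n$ between $\widehat\vartheta_n^{(\Delta,A)}$ and $\vartheta_0$. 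The Hessian converges in probability to the deterministic matrix $V:=\nabla_\vartheta^{2}W^{(A)}(\vartheta_0)$, whose positive definiteness is exactly what the non-degeneracy hypothesis on $\nabla_\vartheta|\Pi^{(\Delta)}(e^{i\omega^{*}},\vartheta_0)|^{-2}$ guarantees, forcing these gradient vectors to span $\R^{N(\Theta)}$ as $\omega$ varies. It then remains to prove a central limit theorem for $\sqrt{n}\,\nabla_\vartheta W_n^{(A)}(\vartheta_0)$, noting that $\nabla_\vartheta W_n^{(A)}(\vartheta_0)=\frac{\pi}{n}\sum_{j}\nabla_\vartheta|\Pi^{(\Delta)}(e^{i\omega_j},\vartheta_0)|^{2}\,I_n(\omega_j)$ is a weighted periodogram, hence a quadratic form in $Y^{(\Delta)}_1,\ldots,Y^{(\Delta)}_n$. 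Establishing $\sqrt{n}\,\nabla_\vartheta W_n^{(A)}(\vartheta_0)\overset{\mathcal D}{\longrightarrow}\mathcal N(0,U)$ by the CLT for quadratic forms in linear processes, and combining it with the Hessian limit via Slutsky, gives $\sqrt{n}(\widehat\vartheta_n^{(\Delta,A)}-\vartheta_0)\overset{\mathcal D}{\longrightarrow}\mathcal N(0,V^{-1}UV^{-1})=:\mathcal N(0,\Sigma_{W^{(A)}})$. In the Brownian-motion case the process is Gaussian, the fourth cumulant drops out, and both $U$ and $V$ can be written through the single kernel $\nabla_\vartheta\log(|\Pi^{(\Delta)}(e^{i\omega},\vartheta_0)|^{-2})$, whereupon the sandwich $V^{-1}UV^{-1}$ collapses to the stated closed form.

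I expect the CLT for the weighted periodogram to be the main obstacle. Beyond the routine control of the error between the discrete sum and the integral, the genuine difficulty is that for a non-Gaussian driving Lévy process the limiting variance $U$ carries an additional fourth-order cumulant term coming from the sampled innovations, which must be identified and shown to be finite; this is precisely why (ii) requires $\E(L_1^4)<\infty$. It is also exactly the ingredient that fails to exist in the heavy-tailed $\alpha$-stable regime $(\alpha<2)$ studied in the remainder of the paper, which is what ultimately obstructs a clean transfer of the Whittle theory to that setting.
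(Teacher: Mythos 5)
This theorem is not proved anywhere in the present paper: it is stated as an imported result, attributed explicitly to \cite{Fasen:Mayer:2020} (their Theorems 1 and 4), and serves only as light-tailed background motivating the stable analysis of Section 4. There is therefore no internal proof to measure your proposal against; the right comparison is with the cited reference, and your sketch follows essentially the same classical Whittle programme used there: uniform convergence of $W_n^{(A)}$ to $\frac{\sigma^2_{\varepsilon^{(\Delta)}}(\vartheta_0)}{2\pi}\int_{-\pi}^{\pi}|\Pi^{(\Delta)}(e^{i\omega},\vartheta)|^2\,|\Pi^{(\Delta)}(e^{i\omega},\vartheta_0)|^{-2}\,d\omega$, identification of $\vartheta_0$ via the Kolmogorov--Szeg\H{o} normalization plus Jensen's inequality (this is exactly Proposition 8 of the supplement of \cite{Fasen:Mayer:2020}, which the present paper itself invokes for the CAR(1) case), and the Taylor/sandwich argument with a CLT for the weighted periodogram under $\E(L_1^4)<\infty$. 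Two steps in your sketch are looser than what the actual argument requires. First, the uniform almost sure convergence cannot be obtained merely from ergodicity of each $\overline{\gamma}_n(h)$ plus a Riemann-sum limit: $I_n$ carries all $n$ lags, so one must approximate $|\Pi^{(\Delta)}(e^{i\omega},\vartheta)|^2$ uniformly by a Fej\'er/Ces\`aro trigonometric polynomial of fixed degree $M$, convert the frequency sum into finitely many sample autocovariances by discrete orthogonality, and bound the remainder by a multiple of $\overline{\gamma}_n(0)$ --- precisely the device the present paper deploys in the proof of its own Theorem 4.2 for the stable case. Second, the equality case of Jensen only yields $|\Pi^{(\Delta)}(e^{i\omega},\vartheta)|=|\Pi^{(\Delta)}(e^{i\omega},\vartheta_0)|$ for almost all $\omega$; to contradict the identifiability assumption (A7) you must additionally use that both filters are minimum-phase (outer, with $\Pi^{(\Delta)}(0,\vartheta)=1$ and no zeros in the closed unit disk), so that equal moduli on the unit circle force equal filters. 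Neither gap is conceptual, and your closing remarks --- the fourth moment entering through the quadratic-form CLT, the cumulant correction vanishing for Brownian motion so that the sandwich collapses to the stated closed form, and the breakdown of these ingredients when $\alpha<2$ --- correctly identify both why the theorem is stated under these hypotheses and why it does not transfer to the stable setting studied in the rest of the paper.
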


\section{Whittle estimation for symmetric $\alpha$-stable CARMA processes}\label{Sec4}
The topic of this paper is Whittle estimation of the parameters of symmetric $\alpha$-stable CARMA processes.
There are several arguments for the conjecture that the Whittle estimator might converge:
\begin{itemize}
    \item[-] For light-tailed CARMA processes the Whittle estimator is consistent and asymptotically normally distributed (see \Cref{Theorem 3.1}).
    \item[-] In several simulations  for symmetric $\alpha$-stable CARMA processes as, e.g.,
    in the setup of \cite{Garcia:Klu:Mueller:2011} (see \Cref{Sec5}), it seems that the Whittle estimator converges to the true parameter.
    \item[-] In the context of ARMA processes the ideas of Whittle estimation for ARMA processes with finite second moments
        could be transferred to ARMA processes with infinite second moments (see \cite{Mikosch:Gardrich:Klueppelberg:1995}). In particular, equidistant sampled CARMA
        processes with finite second moments have a weak ARMA representation. 
\end{itemize}
The aim of this paper
is to mathematically justify why the Whittle estimator for symmetric $\alpha$-stable CARMA processes is in general not converging.
 Therefore, we assume for the rest of the paper that the driving process $L$ of the CARMA process is a symmetric $\alpha$-stable L\'evy process $L^{(\alpha)}$ with $L^{(\alpha)}_1\sim S_\alpha(\sigma,0,0)$ for some $\sigma>0,\ \alpha \in (0,2)$ and that $Y$ is a symmetric $\alpha$-stable CARMA process with kernel function $g(t)=c^\top\e^{A t}e_p\1_{\left[0,\infty\right)}(t)$ as given in \eqref{kernel}.
In analogy to \eqref{adjusted Whittle} for light-tailed CARMA processes,
for symmetric $\alpha$-stable CARMA processes the (adjusted) \textsl{ Whittle function} is
\begin{align*}W^{(\alpha)}_n(\vartheta)&:=\frac{\pi}{n^{2/\alpha}}\sum_{j=-n+1}^{n}|\Pi^{(\Delta)}(e^{i\omega_j},\vartheta)|^{2}I_n(\omega_j), \quad \vartheta\in\Theta,
\end{align*}
and the (adjusted) \textsl{ Whittle estimator} is
$$\widehat\vartheta^{(\Delta,\alpha)}_n:=\arg\min_{\vartheta \in \Theta}W^{(\alpha)}_n(\vartheta).$$

	\begin{theorem}\label{consistency3}
Suppose Assumption A holds. Define $$W^{(\alpha)}(\vartheta):=\frac 1 {2\pi}\int_0^{\Delta}\left[ \int_{-\pi}^{\pi}|\Pi^{(\Delta)}(e^{i\omega},\vartheta)|^2\left|\sum_{j=-\infty}^{\infty} g(\Delta j-s)e^{-ij\omega}\right|^2 d\omega \right]dL_s^{({\alpha/2})},$$ 	where
$L^{(\alpha/ 2)}=(L^{(\alpha/ 2)}_t)_{t\geq 0}$ is an ${\alpha}/{2}$-stable Lévy process with
$L^{(\alpha/2)}_1\sim S_{\alpha/2}({\sigma^2}\left(C_\alpha/C_{\alpha/2}\right)^{2/\alpha},1,0)$ and the constants $C_\alpha$ and $C_{\alpha/2}$ are defined as in \eqref{C_alpha}.	
Then, as $n\to\infty$,
$$ (W^{(\alpha)}_n(\vartheta))_{\vartheta\in\Theta}\overset{\mathcal D}{\longrightarrow} (W^{(\alpha)}(\vartheta))_{\vartheta\in\Theta} \quad \text{ in } \quad (\mathcal{C}(\Theta),\|\cdot\|_{\infty}),$$
where $\mathcal C(\Theta)$ is the space of continuous functions on $\Theta$ with the supremum norm $\|\cdot\|_{\infty}$.
	\end{theorem}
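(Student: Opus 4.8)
The plan is to rewrite $W_n^{(\alpha)}$ as a series in the heavy-tail–normalized sample autocovariances and then transfer a known limit theorem for those autocovariances through a finite-dimensional-plus-tightness argument. Write $\phi_\vartheta(\omega):=|\Pi^{(\Delta)}(e^{i\omega},\vartheta)|^2$ and expand it in its Fourier series $\phi_\vartheta(\omega)=\sum_{m\in\Z}a_m(\vartheta)e^{im\omega}$; since $\phi_\vartheta$ is real and even, $a_m(\vartheta)=a_{-m}(\vartheta)\in\R$. Because the innovations filter $\Pi^{(\Delta)}(\cdot,\vartheta)$ has geometrically decaying power-series coefficients (the matrix $e^{A(\vartheta)\Delta}-K^{(\Delta)}(\vartheta)c(\vartheta)^\top$ has spectral radius strictly below $1$ under Assumption~A), the Fourier coefficients satisfy $|a_m(\vartheta)|\le C\rho^{|m|}$ for some $C<\infty$ and $\rho\in(0,1)$ that, by compactness of $\Theta$, can be chosen uniformly; moreover $\vartheta\mapsto a_m(\vartheta)$ inherits the smoothness of Assumption~\ref{as_D9} with derivative bounds of the same geometric type. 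Inserting the autocovariance form \eqref{Per_ACVF} of $I_n$ and using the discrete orthogonality $\sum_{j=-n+1}^{n}e^{ir\omega_j}=2n\,\1\{r\equiv 0\ (\mathrm{mod}\ 2n)\}$, a direct computation collapses the double sum to
\[
W_n^{(\alpha)}(\vartheta)=\sum_{|h|<n}\widetilde\gamma_n(h)\sum_{k\in\Z}a_{h+2nk}(\vartheta),\qquad \widetilde\gamma_n(h):=n^{-2/\alpha}\sum_{k=1}^{n-h}Y^{(\Delta)}_{k+h}Y^{(\Delta)}_{k}=n^{1-2/\alpha}\overline\gamma_n(h).
\]
The aliasing contributions with $k\neq0$ involve coefficients of index $\ge 2n-|h|\ge n$ and are therefore bounded by $C\rho^{n}\sum_{|h|<n}|\widetilde\gamma_n(h)|$, which is negligible once the autocovariances are controlled, so the leading object is $\sum_{|h|<n}a_h(\vartheta)\widetilde\gamma_n(h)$.

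Next I would invoke the limit theorem for the normalized sample autocovariances established in the Appendix: for every fixed $H$ the finite vector $(\widetilde\gamma_n(h))_{|h|\le H}$ converges in distribution to $(\widetilde\gamma(h))_{|h|\le H}$, where the limit admits the stochastic-integral representation $\widetilde\gamma(h)=\int_0^\Delta G_h(s)\,dL_s^{(\alpha/2)}$ with $G_h(s)=\sum_{j\in\Z}g(\Delta(j+h)-s)\,g(\Delta j-s)$. Here $L^{(\alpha/2)}$ is exactly the $\alpha/2$-stable Lévy process of the statement, its scale being fixed by the tail-constant ratio $C_\alpha/C_{\alpha/2}$ from \eqref{C_alpha}, reflecting that the products $Y_{k+h}^{(\Delta)}Y_k^{(\Delta)}$ lie in the domain of attraction of an $\alpha/2$-stable law. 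A short computation, integrating $a_h(\vartheta)=\tfrac1{2\pi}\int_{-\pi}^{\pi}\phi_\vartheta(\omega)e^{-ih\omega}\,d\omega$ against $G_h$, yields $\sum_{h}a_h(\vartheta)G_h(s)=\tfrac1{2\pi}\int_{-\pi}^{\pi}\phi_\vartheta(\omega)\bigl|\sum_{j}g(\Delta j-s)e^{-ij\omega}\bigr|^2\,d\omega$, so the candidate limit $\sum_h a_h(\vartheta)\widetilde\gamma(h)$ coincides precisely with $W^{(\alpha)}(\vartheta)$ as defined in the statement.

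It then remains to upgrade this to weak convergence in $(\mathcal C(\Theta),\|\cdot\|_\infty)$, which I would do along the classical route of convergence of finite-dimensional distributions together with tightness. For fixed $\vartheta_1,\dots,\vartheta_r$ the truncated sums $\sum_{|h|\le H}a_h(\vartheta_i)\widetilde\gamma_n(h)$ converge jointly by the continuous-mapping theorem applied to the finite-dimensional convergence above, and letting $H\to\infty$ after controlling the lag-tail gives convergence of the finite-dimensional distributions of $W_n^{(\alpha)}$. Tightness (stochastic equicontinuity) would follow from the Lipschitz bound $|a_h(\vartheta)-a_h(\vartheta')|\le C\rho^{|h|}|\vartheta-\vartheta'|$ combined with a uniform-in-$n$ bound on $\sum_h\rho^{|h|}|\widetilde\gamma_n(h)|$; the same estimate shows that $\vartheta\mapsto W^{(\alpha)}(\vartheta)$ is almost surely continuous, so the limit genuinely lives in $\mathcal C(\Theta)$.

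The main obstacle is exactly this uniform-in-$n$ tail control of the series $\sum_h a_h(\vartheta)\widetilde\gamma_n(h)$, which must serve double duty: it makes both the lag-tail and the aliasing terms asymptotically negligible, and it supplies tightness. Since the process has infinite variance, second moments are unavailable, so I would instead estimate fractional moments $\E|\widetilde\gamma_n(h)|^{p}$ for some $p<\alpha/2$ (or bound the tail probabilities of the normalized products directly), exploiting the geometric decay of the kernel $g$ — hence of $G_h$ in $h$ — together with the uniform geometric decay of $a_h(\vartheta)$ over $\Theta$. Establishing these heavy-tailed moment/tail bounds uniformly in $n$, and verifying that they dominate any polynomial-in-$h$ growth so that $\lim_{H\to\infty}\limsup_n\P\bigl(\sup_{\vartheta\in\Theta}|\sum_{|h|>H}a_h(\vartheta)\widetilde\gamma_n(h)|>\varepsilon\bigr)=0$, is the crux; the remaining steps are bookkeeping on top of the Appendix's finite-dimensional convergence.
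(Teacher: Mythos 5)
Your overall architecture matches the paper's: expand the spectral weight $|\Pi^{(\Delta)}(e^{i\omega},\vartheta)|^2$ in Fourier coefficients, use discrete orthogonality to collapse $W_n^{(\alpha)}$ into weighted sums of normalized sample autocovariances, identify the limit via $\sum_h a_h(\vartheta)G_h(s)$, and invoke the appendix's joint convergence (\Cref{CorDrapatz}) together with the continuous mapping theorem (the paper truncates with a Ces\`aro/Fej\'er sum of size $M$ rather than keeping the full series, but that difference is minor). The genuine gap is exactly the step you flag as ``the crux'' and leave open: proving $\lim_{H\to\infty}\limsup_n\P\bigl(\sup_{\vartheta\in\Theta}\bigl|\sum_{|h|>H}a_h(\vartheta)\widetilde\gamma_n(h)\bigr|>\varepsilon\bigr)=0$, on which your fidi convergence, your aliasing estimate, and your tightness argument all rest. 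Worse, the route you propose for it --- uniform-in-$n$ fractional moment bounds $\E|\widetilde\gamma_n(h)|^{p}$ with $p<\alpha/2$ --- does not work as stated: since the products $Y^{(\Delta)}_{k+h}Y^{(\Delta)}_k$ only possess moments of order $p<\alpha/2$, the subadditivity estimate gives $\E|\widetilde\gamma_n(h)|^{p}\le n^{-2p/\alpha}\sum_{k}\E|Y^{(\Delta)}_{k+h}Y^{(\Delta)}_k|^{p}=O\bigl(n^{1-2p/\alpha}\bigr)$, which diverges precisely because $p<\alpha/2$. A genuinely uniform bound would require heavy-tailed truncation/Karamata arguments, further complicated by the dependence of the sampled CARMA process; none of this is ``bookkeeping.''

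The fix is elementary and is, in essence, what the paper does. By Cauchy--Schwarz, $|\overline\gamma_n(h)|\le\overline\gamma_n(0)$ for every $h$, so with your geometric bound $|a_h(\vartheta)|\le C\rho^{|h|}$,
\begin{equation*}
\sup_{\vartheta\in\Theta}\Bigl|\sum_{H<|h|<n}a_h(\vartheta)\widetilde\gamma_n(h)\Bigr|\;\le\;\frac{2C\rho^{H+1}}{1-\rho}\;n^{1-2/\alpha}\,\overline\gamma_n(0),
\end{equation*}
and $n^{1-2/\alpha}\overline\gamma_n(0)$ converges in distribution, hence is tight, by \Cref{CorDrapatz}; letting $n\to\infty$ and then $H\to\infty$ kills the lag tail, and the same bound disposes of the aliasing terms (which are $O\bigl(n\rho^{n}\,n^{1-2/\alpha}\overline\gamma_n(0)\bigr)$) and supplies the stochastic equicontinuity you need for tightness. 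The paper implements the identical mechanism in a slightly different guise: it approximates $|\Pi^{(\Delta)}(e^{i\omega},\vartheta)|^2$ uniformly in $(\omega,\vartheta)$ by Ces\`aro sums $q_M$ (Lemma 6 of \cite{Fasen:Mayer:2020}), uses the positivity of the periodogram to bound the resulting error by $\varepsilon_1\,n^{1-2/\alpha}\overline\gamma_n(0)$, and then assembles the three limits with Theorem 3.2 of \cite{Billingsley:1999} --- no moment bounds on the sample autocovariances are ever needed. With the Cauchy--Schwarz bound inserted in place of your fractional-moment program, your proof closes along the same lines.
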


\begin{proof}
We approximate $|\Pi^{(\Delta)}(e^{i\omega_j}, \vartheta)|^{2}$ by the Cesàro sum of its Fourier series of size $M$  for $M$ sufficiently large. Define \begin{align*}
	q_M(\omega,\vartheta)&:=\frac{1}{M}\sum_{j=0}^{M-1}\left(\sum_{|k|\leq j}{b}_k(\vartheta) e^{-ik\omega} \right)=\sum_{|k|<M}\left(1-\frac{|k|}{M}\right)b_k(\vartheta) e^{-ik\omega} \quad \text{ with }\\b_k(\vartheta)&:= \frac 1 {2\pi}\int_{-\pi}^{\pi} |\Pi^{(\Delta)}(e^{i\omega}, \vartheta)|^{2}e^{ik\omega}d\omega
	\end{align*}
	and $$W_{M,n}^{(\alpha)}(\vartheta):=\frac{\pi}{n^{2/\alpha}}\sum_{j=-n+1}^{n}q_M(\omega_j,\vartheta)I_n(\omega_j), \quad \vartheta\in\Theta.$$
	Let $\varepsilon_1>0$. A conclusion of Lemma 6 of \cite{Fasen:Mayer:2020} is that there exists an
$M_0(\varepsilon_1)\in \N$ such that for $M\geq M_0(\varepsilon_1)$
\begin{eqnarray}\label{q_M_dist}\sup_{\omega \in [-\pi,\pi]}\sup_{\vartheta\in \Theta}|q_M(\omega,\vartheta)-|\Pi^{(\Delta)}(e^{i\omega},\vartheta)|^2|<\varepsilon_1.
\end{eqnarray} Similar arguments as in the proof of Proposition 2 in \cite{Fasen:Mayer:2020} yield
$$\sup_{\vartheta \in \Theta }\left|W_n^{(\alpha)}(\vartheta)-W_{M,n}^{(\alpha)}(\vartheta) \right|\leq\frac{ \varepsilon_1}{n^{2/\alpha -1}}\overline \gamma_n(0) \quad \text{ for } M\geq M_0(\epsilon_1).$$
	Due to \Cref{CorDrapatz} $$\frac{ 1}{n^{2/\alpha -1}}\overline \gamma_n(0)\overset{\mathcal D}{\longrightarrow}\int_0^\Delta \sum_{j=-\infty}^{\infty}g(\Delta j-s)^2 dL^{({\alpha/2})}_s \quad \text{ as } n\to\infty.$$
Therefore, we have for any $\epsilon_2>0$
	\begin{eqnarray}\label{T1}
\lim_{M\to \infty}\limsup_{n\to \infty} \P\left(\sup_{\vartheta \in \Theta }\left|W_n^{(\alpha)}(\vartheta)-W_{M,n}^{(\alpha)}(\vartheta) \right|>\varepsilon_2\right)=0.
	\end{eqnarray}
Furthermore, representation \eqref{Per_ACVF} gives
	\begin{eqnarray} \label{F.1}
	\nonumber W_{M,n}^{(\alpha)}(\vartheta) &=&\sum_{|k|<M}\left(\left(1-\frac{|k|}{M}\right)b_k(\vartheta)\left(n^{-2/\alpha+1}\sum_{|h|<n}\overline\gamma_n(h)\right){\left(\frac{1}{2n}\sum_{j=-n+1}^{n}e^{-i(k+h)\omega_j}\right)}\right)\\
&=&\sum_{|k|<M}\left(1-\frac{|k|}{M}\right)b_k(\vartheta)n^{-2/\alpha+1}\overline\gamma_n(-k).	\end{eqnarray}
	We define
\beam \label{F.2}
		W^{(\alpha)}_M(\vartheta):=\sum_{|k|<M}\left(1-\frac{|k|}{M}\right)b_{k}(\vartheta)\int_0^\Delta \sum_{j=-\infty}^{\infty}g(\Delta(j+k)-s)g(\Delta j-s)dL_s^{(\alpha/2)}.
\eeam
Due to Assumption (A8) and the definition of $\Pi^{(\Delta)}$, there exists a constant $\mathfrak{C}>0$ such that for any $\delta>0$
 \begin{align*}
 \sup_{\substack{|\vartheta_1-\vartheta_2|<\delta\\ \vartheta_1,\vartheta_2\in\Theta,k \in \Z}} |b_k(\vartheta_1)-b_k(\vartheta_2)|&=\sup_{\substack{|\vartheta_1-\vartheta_2|<\delta\\ \vartheta_1,\vartheta_2\in\Theta,k \in \Z}} \left| \frac{1}{2\pi}\int_{-\pi}^{\pi} \left(|\Pi^{(\Delta)}(e^{i\omega}, \vartheta_1)|^{2}-|\Pi^{(\Delta)}(e^{i\omega},\vartheta_2) |^2\right)e^{ik\omega}d\omega \right|\\
	 &\leq \max_{|\vartheta_1-\vartheta_2|<\delta \atop \vartheta_1,\vartheta_2\in\Theta} \max_{\omega\in [-\pi,\pi]}||\Pi^{(\Delta)}(e^{i\omega},\vartheta_1) |^2-|\Pi^{(\Delta)}(e^{i\omega},\vartheta_2) |^2|\leq \mathfrak C\delta. \end{align*}
This means that $(b_k(\vartheta))_{\vartheta\in\Theta}$ is uniformly continuous.
By \Cref{CorDrapatz}, we have the joint convergence of the random vector $(\overline\gamma_n(-M+1),\ldots,\overline\gamma_n(M-1))$
implying with the representations \eqref{F.1}, \eqref{F.2} and  the continuous mapping theorem that
\beam
    (W_{M,n}^{(\alpha)}(\vartheta))_{\vartheta\in\Theta}\overset{\mathcal D}{\longrightarrow}(W_{M}^{(\alpha)}(\vartheta))_{\vartheta\in\Theta}\quad \text{ in } \quad (\mathcal{C}(\Theta),\|\cdot\|_{\infty}).
\eeam
Furthermore, \begin{align*}
W_M^{(\alpha)}(\vartheta)&=\int_0^\Delta\sum_{|k|<M}\left(1-\frac{|k|}{M}\right)b_{k}(\vartheta) \sum_{j=-\infty}^{\infty}g(\Delta(j+k)-s)g(\Delta j-s)dL_s^{(\alpha/2)}\\
&=\int_0^\Delta\sum_{|k|<M}\left(1-\frac{|k|}{M}\right)b_{k}(\vartheta) \sum_{j,\ell=-\infty}^{\infty}g(\Delta j-s)g(\Delta \ell-s)\left[\frac 1 {2\pi}\int_{-\pi}^{\pi}e^{-i(j+k-\ell)\omega}d\omega\right]dL_s^{(\alpha/2)}\\
&=\frac{1}{2\pi}\int_0^\Delta\left[\int_{-\pi}^{\pi}q_M(\omega,\vartheta)\left| \sum_{j=-\infty}^{\infty}g(\Delta j-s)e^{-ij\omega}\right|^2d\omega\right]dL_s^{(\alpha/2)}.
\end{align*}
By this, $$
W_M^{(\alpha)}(\vartheta)-W^{(\alpha)}(\vartheta)=\frac{1}{2\pi}\int_0^\Delta\left[\int_{-\pi}^{\pi}\left[q_M(\omega,\vartheta)-|\Pi^{(\Delta)}(e^{i\omega},\vartheta)|^2\right]\left| \sum_{j=-\infty}^{\infty}g(\Delta j-s)e^{-ij\omega}\right|^2d\omega\right]dL_s^{(\alpha/2)}$$ holds.
Since the process $L^{(\alpha/2)}$ is positive and increasing we obtain
\begin{eqnarray*}
\lefteqn{\sup_{\vartheta\in \Theta}|W_M^{(\alpha)}(\vartheta)-W^{(\alpha)}(\vartheta)|}\\&\leq& \frac{1}{2\pi} \int_0^\Delta\left[\int_{-\pi}^{\pi}\sup_{\vartheta \in \Theta}\left|q_M(\omega,\vartheta)-|\Pi^{(\Delta)}(e^{i\omega},\vartheta)|^2\right|\left| \sum_{j=-\infty}^{\infty}g(\Delta j-s)e^{-ij\omega}\right|^2d\omega\right]dL_s^{(\alpha/2)}
=:\widetilde W_M^{(\alpha/2)}.
\end{eqnarray*}
Note that by Property 3.2.2 of \cite{Samrodonitsky:Taqqu:1994},   $\widetilde W_M^{(\alpha/2)}\sim S_{\alpha/2}(\sigma_M,\beta_M,\mu_M)$ where $\beta_M=1,\ \mu_M=0$  and  $$\sigma_M^{\alpha/2}=\frac {\sigma^\alpha C_\alpha}{ C_{\alpha/2}}\int_0^\Delta  \left[\frac 1 {2\pi}\int_{-\pi}^{\pi}\sup_{\vartheta \in \Theta}\left|q_M(\omega,\vartheta)-|\Pi^{(\Delta)}(e^{i\omega},\vartheta)|^2\right|\left| \sum_{j=-\infty}^{\infty}g(\Delta j-s)e^{-ij\omega}\right|^2d\omega\right]^{\alpha/2}ds.$$
Due to Assumption \ref{as_D3} there exists a constant $\mathfrak{C}>0$ such that
$
	\left(\sum_{j=0}^{\infty}\left\| e^{A(\Delta j-s)}\right\|\right)^2 <\mathfrak C$ for any $s\in [0,\Delta]$. Thus,
\beao
    \int_0^\Delta\left[\int_{-\pi}^\pi \left|\sum_{j=-\infty}^{\infty}g(\Delta j-s)e^{-ij\omega}\right|^2 d\omega\right]^{\alpha/2}\hspace*{-0.4cm}ds&\leq & \int_0^\Delta\left[\int_{-\pi}^\pi \|c\|^2 \|e_p\|^2	\left(\sum_{j=0}^{\infty} \left\| e^{A(\Delta j-s)}\right\|\right)^2  d\omega\right]^{\alpha/2}\hspace*{-0.4cm}ds <\infty.
\eeao
A conclusion of this and \eqref{q_M_dist} is that $\sigma_M^{\alpha/2}\overset{M\to \infty}{\longrightarrow}0$ and hence,
 the characteristic function $\varphi_{\widetilde W_M^{\alpha/2}}$ of $\widetilde W_M^{(\alpha/2)}$ converges pointwise to $\varphi_{\widetilde W^{\alpha/2}}\equiv 1$.
  An application of L\'evys continuity theorem results then in $\widetilde W_M^{(\alpha/2)}\overset{\P}{\longrightarrow} 0$ as $M\to\infty$. Finally,
\beam
  \label{T3}  \sup_{\vartheta\in \Theta}|W_M^{(\alpha)}(\vartheta)-W^{(\alpha)}(\vartheta)|\overset{\P}{\longrightarrow} 0 \quad \text{ as } M\to\infty
\eeam
as well.
In view of \eqref{T1}-\eqref{T3}, Theorem 3.2 of \cite{Billingsley:1999} completes the proof.
\end{proof}

\begin{corollary} \label{Lemma:4.2}
Let the assumptions of \Cref{consistency3} hold. Define $G_{\vartheta,\vartheta_0}:[0,\Delta]\to \R$ as
$$G_{\vartheta,\vartheta_0}(u)=
\frac 1 {2\pi} \int_{-\pi}^{\pi}\left[|\Pi^{(\Delta)}(e^{i\omega},\vartheta)|^2-|\Pi^{(\Delta)}(e^{i\omega},\vartheta_0)|^2\right]\left|\sum_{j=-\infty}^{\infty} g(\Delta j-u)e^{-ij\omega}\right|^2 d\omega.$$
Then,
\begin{eqnarray*}
    W^{(\alpha)}(\vartheta)-W^{(\alpha)}(\vartheta_0)\sim S_{\alpha/2}(\sigma_{\vartheta,\vartheta_0},\beta_{\vartheta,\vartheta_0},0)
\end{eqnarray*}
is an ${\alpha}/{2}$-stable random variable with parameters
    \begin{eqnarray*}
    \beta_{\vartheta,\vartheta_0}&=&\frac{\int_{0}^{\Delta}(G_{\vartheta,\vartheta_0}^+(s))^{\alpha/2}-(G_{\vartheta,\vartheta_0}^-(s))^{\alpha/2}ds}{\int_0^{\Delta}|G_{\vartheta,\vartheta_0}(s)|^{\alpha/2}ds},\\
	\sigma_{\vartheta,\vartheta_0}^{\alpha/2}&=&\frac{\sigma^{\alpha}C_\alpha}{C_{\alpha/2}}\int_0^\Delta |G_{\vartheta,\vartheta_0}(s)|^{\alpha/2}ds.
		\end{eqnarray*}
\end{corollary}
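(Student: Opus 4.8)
The key observation is that $G_{\vartheta,\vartheta_0}(s)$ is exactly the inner bracket appearing in the difference $W^{(\alpha)}(\vartheta)-W^{(\alpha)}(\vartheta_0)$ of the representation established in \Cref{consistency3}. Indeed, by linearity of the stable stochastic integral,
$$W^{(\alpha)}(\vartheta)-W^{(\alpha)}(\vartheta_0)=\frac{1}{2\pi}\int_0^\Delta\left[\int_{-\pi}^\pi\left(|\Pi^{(\Delta)}(e^{i\omega},\vartheta)|^2-|\Pi^{(\Delta)}(e^{i\omega},\vartheta_0)|^2\right)\left|\sum_{j=-\infty}^\infty g(\Delta j-s)e^{-ij\omega}\right|^2d\omega\right]dL_s^{(\alpha/2)}=\int_0^\Delta G_{\vartheta,\vartheta_0}(s)\,dL_s^{(\alpha/2)},$$
so the statement reduces to computing the law of a stochastic integral of the deterministic function $G_{\vartheta,\vartheta_0}$ against the $\alpha/2$-stable L\'evy motion $L^{(\alpha/2)}$.

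The plan is to invoke Property 3.2.2 of \cite{Samrodonitsky:Taqqu:1994}. First I would read off the control measure and the skewness intensity of $L^{(\alpha/2)}$ from its marginal $L_1^{(\alpha/2)}\sim S_{\alpha/2}(\sigma^2(C_\alpha/C_{\alpha/2})^{2/\alpha},1,0)$ fixed in \Cref{consistency3}. Since the associated stable random measure assigns to $[0,1]$ a variable with scale $\sigma^2(C_\alpha/C_{\alpha/2})^{2/\alpha}$, its control measure is the constant multiple of Lebesgue measure
$$m(ds)=\left(\sigma^2(C_\alpha/C_{\alpha/2})^{2/\alpha}\right)^{\alpha/2}ds=\frac{\sigma^\alpha C_\alpha}{C_{\alpha/2}}\,ds,$$
while its skewness intensity is constantly equal to $1$, matching the total positive skewness of the positive, increasing motion $L^{(\alpha/2)}$.

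With these ingredients, Property 3.2.2 delivers directly that $\int_0^\Delta G_{\vartheta,\vartheta_0}(s)\,dL_s^{(\alpha/2)}$ is $\alpha/2$-stable with scale $\sigma_{\vartheta,\vartheta_0}^{\alpha/2}=\int_0^\Delta|G_{\vartheta,\vartheta_0}(s)|^{\alpha/2}m(ds)=\frac{\sigma^\alpha C_\alpha}{C_{\alpha/2}}\int_0^\Delta|G_{\vartheta,\vartheta_0}(s)|^{\alpha/2}ds$ and skewness
$$\beta_{\vartheta,\vartheta_0}=\frac{\int_0^\Delta\operatorname{sign}(G_{\vartheta,\vartheta_0}(s))\,|G_{\vartheta,\vartheta_0}(s)|^{\alpha/2}\,ds}{\int_0^\Delta|G_{\vartheta,\vartheta_0}(s)|^{\alpha/2}\,ds};$$
rewriting $\operatorname{sign}(G)\,|G|^{\alpha/2}=(G^+)^{\alpha/2}-(G^-)^{\alpha/2}$ recovers the claimed expression for $\beta_{\vartheta,\vartheta_0}$. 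Because $\alpha\in(0,2)$ forces $\alpha/2\in(0,1)$, the index is never $1$, so no logarithmic correction appears and the shift parameter equals $0$.

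The only point requiring a little care is to confirm $G_{\vartheta,\vartheta_0}\in L^{\alpha/2}([0,\Delta])$, so that the integral and hence Property 3.2.2 genuinely apply. This follows from the boundedness of $|\Pi^{(\Delta)}(e^{i\omega},\cdot)|^2$ on the unit circle together with the summability bound $\left(\sum_{j=0}^\infty\|e^{A(\Delta j-s)}\|\right)^2<\mathfrak{C}$ already established in the proof of \Cref{consistency3}, which give $\int_0^\Delta|G_{\vartheta,\vartheta_0}(s)|^{\alpha/2}\,ds<\infty$. I do not expect a genuine obstacle here: once the control measure has been correctly normalized via the exponent $\alpha/2$, the result is a bookkeeping application of the stable integral formula.
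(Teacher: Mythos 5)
Your proposal is correct and follows essentially the same route as the paper: the paper deduces the corollary from the integral representation of $W^{(\alpha)}$ in \Cref{consistency3} together with linearity of the stable integral and the parameter formulas of Property 1.2.3 and 3.2.2 of \cite{Samrodonitsky:Taqqu:1994} (stated in \Cref{RemDrapatz}), which is exactly your argument, including the identification of the control measure $\frac{\sigma^{\alpha}C_\alpha}{C_{\alpha/2}}\,ds$ and skewness intensity $1$. Your closing integrability check matches the bound already established in the proof of \Cref{consistency3}, so nothing is missing.
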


\subsection{Whittle estimation for symmetric $\alpha$-stable Ornstein-Uhlenbeck processes} \label{sec:4.1}

An Ornstein-Uhlenbeck process $Y_t(\vartheta)=\int_{-\infty}^{t}e^{\vartheta(t-s)}dL_s^{(\alpha)}$, $t\geq0$, sampled equidistantly has the AR(1) representation
\beao
    Y_k^{(\Delta)}(\vartheta)=\e^{\vartheta\Delta}Y_{k-1}^{(\Delta)}(\vartheta)+\xi_k^{(\Delta)}(\vartheta),
\eeao
where $\xi_k^{(\Delta)}(\vartheta)=\int_{(k-1)\Delta}^{k\Delta}\e^{\vartheta(k\Delta-s)}\,dL_s^{\alpha}$, $k\in\N$, is an iid symmetric $\alpha$-stable
sequence. Since the distribution of the white noise $\xi_k^{(\Delta)}(\vartheta)$ depends on $\vartheta$, the theory of \cite{Mikosch:Gardrich:Klueppelberg:1995} can not be applied
directly to estimate $\vartheta$ in this setting even though we have an AR$(1)$ representation. Thus, in this subsection we derive the
consistency of the Whittle estimator for symmetric $\alpha$-stable Ornstein-Uhlenbeck processes.

\begin{proposition}\label{OU}
Let $Y_t(\vartheta)=\int_{-\infty}^{t}e^{\vartheta(t-s)}dL_s^{(\alpha)}$, $t\geq0$, for $\vartheta \in \Theta \subseteq\left(-\infty,0\right)$ and $\Theta$ compact be a family of symmetric $\alpha$-stable Ornstein-Uhlenbeck processes. Then, as $n\to\infty$,
\beao
     W^{(\alpha)}_n(\vartheta)\overset{\mathcal D}{\longrightarrow}W_{OU}(\vartheta) S_{\alpha/2}^* \quad \text{ in } \quad (\mathcal{C}(\Theta),\|\cdot\|_{\infty}),
\eeao
where $S_{\alpha/2}^*$ is a positive $\alpha/2$-stable random variable and
\beao
    W_{OU}(\vartheta)=\frac{1}{2\pi}\int_{-\pi}^\pi|1-\e^{\vartheta\Delta+i\omega}|^2|1-\e^{\vartheta_0\Delta+i\omega}|^{-2}\,d\omega, \quad \vartheta \in \Theta.
\eeao
\end{proposition}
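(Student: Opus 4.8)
The plan is to specialise the general weak-convergence result of \Cref{consistency3} to the CAR(1) setting and then to show that, because the kernel of a symmetric $\alpha$-stable Ornstein–Uhlenbeck process is a \emph{single} exponential, the limiting random field $W^{(\alpha)}(\vartheta)$ factorises into a deterministic function of $\vartheta$ times one $\vartheta$-independent positive $\alpha/2$-stable random variable. Since \Cref{consistency3} already gives $W_n^{(\alpha)}(\vartheta)\overset{\mathcal D}{\longrightarrow}W^{(\alpha)}(\vartheta)$ in $(\mathcal C(\Theta),\|\cdot\|_\infty)$, and $W_{OU}$ is clearly continuous in $\vartheta$ (the integrand is bounded, with denominator bounded away from zero because $\vartheta_0<0$), it suffices to evaluate $W^{(\alpha)}(\vartheta)$ explicitly and to recognise the product structure.

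First I would compute the two ingredients entering $W^{(\alpha)}(\vartheta)$. For the Ornstein–Uhlenbeck process we have $p=1$, $A(\vartheta)=\vartheta$ and $c(\vartheta)=1$, so the state is observed directly; solving the scalar Riccati equation gives $\Omega^{(\Delta)}(\vartheta)=(e^{2\vartheta\Delta}-1)/(2\vartheta)>0$ and hence the Kalman gain $K^{(\Delta)}(\vartheta)=e^{\vartheta\Delta}$, so that $e^{A(\vartheta)\Delta}-K^{(\Delta)}(\vartheta)c(\vartheta)^\top=0$ and therefore $\Pi^{(\Delta)}(z,\vartheta)=1-e^{\vartheta\Delta}z$; consequently $|\Pi^{(\Delta)}(e^{i\omega},\vartheta)|^2=|1-e^{\vartheta\Delta+i\omega}|^2$, recovering the exact AR(1) structure noted before the proposition. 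Next, using the true kernel $g(t)=e^{\vartheta_0 t}\1_{[0,\infty)}(t)$ with $\vartheta_0<0$, for a.e.\ $s\in(0,\Delta)$ only the indices $j\ge 1$ contribute to $\sum_{j} g(\Delta j-s)e^{-ij\omega}$, and summing the resulting geometric series yields the crucial factorisation
$$\left|\sum_{j=-\infty}^{\infty}g(\Delta j-s)e^{-ij\omega}\right|^2=e^{2\vartheta_0(\Delta-s)}\,\bigl|1-e^{\vartheta_0\Delta+i\omega}\bigr|^{-2},$$
i.e.\ a product of a function of $s$ alone and a function of $\omega$ alone.

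Substituting both expressions into the definition of $W^{(\alpha)}(\vartheta)$ from \Cref{consistency3}, the factor $e^{2\vartheta_0(\Delta-s)}$ can be pulled out of the inner $\omega$-integral, and the remaining $\omega$-integral does not depend on $s$, so it can be pulled out of the stochastic integral in $s$. The $\omega$-integral then equals exactly $2\pi\,W_{OU}(\vartheta)$, while the $s$-part collapses to $\int_0^\Delta e^{2\vartheta_0(\Delta-s)}\,dL_s^{(\alpha/2)}$. Setting $S_{\alpha/2}^*:=\int_0^\Delta e^{2\vartheta_0(\Delta-s)}\,dL_s^{(\alpha/2)}$ therefore gives $W^{(\alpha)}(\vartheta)=W_{OU}(\vartheta)\,S_{\alpha/2}^*$ as an almost-sure identity, and since both sides are continuous in $\vartheta$ it holds simultaneously for all $\vartheta$, hence as an identity of random elements of $\mathcal C(\Theta)$. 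It remains to verify that $S_{\alpha/2}^*$ is a positive $\alpha/2$-stable random variable: the driving process $L^{(\alpha/2)}$ is an increasing $\alpha/2$-stable subordinator (its increments have skewness $\beta=1$) and the integrand $e^{2\vartheta_0(\Delta-s)}$ is strictly positive, so by Property 3.2.2 of \cite{Samrodonitsky:Taqqu:1994} the integral is positive and $\alpha/2$-stable. Crucially, $S_{\alpha/2}^*$ does not depend on $\vartheta$, which is what allows the limit to be written as a deterministic curve $W_{OU}$ scaled by a single random variable, and completes the identification.

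I expect the only genuine subtlety to be the explicit factorisation step: it hinges on the kernel being a single exponential, so that the doubly infinite sum reduces to one geometric series whose squared modulus separates cleanly into an $s$-part and an $\omega$-part. This is precisely the feature that fails for higher-order CARMA kernels (which are mixtures of exponentials, whereby the cross terms couple $s$ and $\omega$), and it is exactly why the Whittle estimator is consistent for CAR(1) but not in general. The remaining manipulations—the Kalman-gain computation, the interchange of the $\omega$- and $s$-integrals, and the stability bookkeeping—are routine.
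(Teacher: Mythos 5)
Your proposal is correct and follows essentially the same route as the paper: specialise \Cref{consistency3}, use $\Pi^{(\Delta)}(z,\vartheta)=1-e^{\vartheta\Delta}z$, sum the geometric series for the kernel to separate the $s$- and $\omega$-dependence, and identify $S_{\alpha/2}^*=\int_0^\Delta e^{2\vartheta_0(\Delta-s)}\,dL_s^{(\alpha/2)}$ as a positive $\alpha/2$-stable random variable via Property 3.2.2 of \cite{Samrodonitsky:Taqqu:1994}. The only cosmetic differences are that you derive the transfer function from the Riccati/Kalman-gain computation where the paper simply states it, and you argue positivity directly from the subordinator property rather than via the explicit skewness parameter and Proposition 1.2.11, both of which are equivalent.
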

\begin{proof}
The Ornstein-Uhlenbeck process $Y(\vartheta)$ has the kernel function  $g_\vartheta(t)=e^{\vartheta t}\mathbf{1}_{[0,\infty)}(t)$ and the transfer function $ \Pi^{(\Delta)}(z,\vartheta)=1-e^{\vartheta \Delta}z.$ 	Therefore, an application of \Cref{consistency3} yields  as $n\to\infty$,
\begin{eqnarray*}
	W^{(\alpha)}_n(\vartheta)&\overset{\mathcal D}{\longrightarrow} &\frac 1 {2\pi}\int_0^{\Delta}\left[ \int_{-\pi}^{\pi}|\Pi^{(\Delta)}(e^{i\omega},\vartheta)|^2\left|\sum_{j=-\infty}^{\infty} g_{\vartheta_0}(\Delta j-s)e^{-ij\omega}\right|^2d\omega \right]dL_s^{{\alpha/2}}\\
	&=&\frac 1 {2\pi}\int_0^{\Delta}\left[ \int_{-\pi}^{\pi}|1-e^{\vartheta\Delta+i\omega}|^2\left|\sum_{j=1}^{\infty} \e^{\vartheta_0(\Delta j-s)}e^{-ij\omega}\right|^2d\omega \right]dL_s^{{\alpha/2}}\\
	&=&\frac{1}{2\pi}\int_{-\pi}^\pi|1-\e^{\vartheta\Delta+i\omega}|^2|1-\e^{\vartheta_0\Delta+i\omega}|^{-2}\,d\omega \int_0^\Delta e^{2\vartheta_0(\Delta-s)}dL_s^{(\alpha/2)}
	\end{eqnarray*}
in $(\mathcal{C}(\Theta),\|\cdot\|_{\infty})$.	
 Define $S_{\alpha/2}^*:=\int_0^\Delta e^{2\vartheta_0(\Delta-s)}dL_s^{(\alpha/2)}.$
	Due to Property 3.2.2 of \cite{Samrodonitsky:Taqqu:1994}
    $$S_{\alpha/2}^*\sim S_{\alpha/2}\left( \left(\frac{\sigma^\alpha C_\alpha}{ C_{\alpha/2}}\int_0^\Delta e^{{\alpha} \vartheta_0 s}ds\right)^{2/\alpha},1,0\right)$$ which implies that $S_{\alpha/2}^*$ is positive (see Proposition 1.2.11 of \cite{Samrodonitsky:Taqqu:1994}).
\end{proof}

\begin{proposition}
	Let the assumptions of \Cref{OU} hold. Then, $W_{OU}$ has a unique minimum in $\vartheta_0$.
\end{proposition}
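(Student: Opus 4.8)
The plan is to evaluate the integral defining $W_{OU}$ in closed form and then read off the minimizer. First I would reduce the problem to a one-dimensional optimization by the substitution $a=\e^{\vartheta\Delta}$ and $a_0=\e^{\vartheta_0\Delta}$. Since $\vartheta,\vartheta_0\in\Theta\subseteq(-\infty,0)$, both $a$ and $a_0$ lie in $(0,1)$, and the map $\vartheta\mapsto a$ is a strictly increasing bijection from $(-\infty,0)$ onto $(0,1)$. Hence it suffices to show that $a\mapsto W_{OU}(\vartheta)$ has a unique minimum at $a=a_0$.

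Next I would compute the integral explicitly. Writing the numerator as $|1-\e^{\vartheta\Delta+i\omega}|^2=1+a^2-a(\e^{i\omega}+\e^{-i\omega})$ and expanding the reciprocal of the denominator into its Fourier series $|1-a_0\e^{i\omega}|^{-2}=\sum_{m=-\infty}^{\infty}c_m\e^{im\omega}$ with $c_m=a_0^{|m|}/(1-a_0^2)$ — which is legitimate because $a_0<1$ makes the underlying geometric series converge uniformly in $\omega$ — the orthogonality relation $\frac{1}{2\pi}\int_{-\pi}^{\pi}\e^{im\omega}\,d\omega=\1_{\{m=0\}}$ picks out only the coefficients $c_0$ and $c_{\pm1}$. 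This yields
$$W_{OU}(\vartheta)=\frac{1+a^2-2aa_0}{1-a_0^2}=1+\frac{(a-a_0)^2}{1-a_0^2}.$$

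From this closed form the conclusion is immediate: as a function of $a$ the right-hand side is a strictly convex quadratic, since the coefficient $1/(1-a_0^2)$ of $(a-a_0)^2$ is strictly positive, and it attains its unique global minimum, equal to $1$, exactly at $a=a_0$. Transferring back through the bijection $\vartheta\mapsto a=\e^{\vartheta\Delta}$ then shows that $W_{OU}$ has a unique minimum at $\vartheta=\vartheta_0$.

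I do not anticipate a genuine obstacle here; the only point requiring a little care is the Fourier expansion of $|1-a_0\e^{i\omega}|^{-2}$ and the interchange of summation and integration, which is justified by the uniform convergence of the geometric series on $[-\pi,\pi]$ since $|a_0|<1$. An alternative to the term-by-term integration is to evaluate the integral by residues, or to recognize numerator and denominator as the spectral densities of AR$(1)$ processes with coefficients $a$ and $a_0$ so that $W_{OU}$ becomes a ratio computable through the corresponding autocovariances; either route leads to the same quadratic expression.
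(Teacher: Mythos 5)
Your proof is correct, and it takes a genuinely different route from the paper. The paper disposes of the claim in two lines by citing a general identifiability result (Proposition 8 of the Supplementary Material of Fasen--Mayer 2020), which states that under the standing Assumptions \ref{as_D1}, \ref{as_D5} and \ref{as_D7} one has $\frac{1}{2\pi}\int_{-\pi}^{\pi}|\Pi^{(\Delta)}(e^{i\omega},\vartheta)|^2\,|\Pi^{(\Delta)}(e^{i\omega},\vartheta_0)|^{-2}\,d\omega>1=W_{OU}(\vartheta_0)$ whenever $\vartheta\neq\vartheta_0$; that argument works for arbitrary CARMA transfer functions, not just the AR(1)-type factor $1-e^{\vartheta\Delta}z$, but it leaves the limit function implicit. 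You instead exploit the special structure of the Ornstein--Uhlenbeck case: with $a=e^{\vartheta\Delta}$, $a_0=e^{\vartheta_0\Delta}\in(0,1)$ and the Fourier expansion $|1-a_0e^{i\omega}|^{-2}=\sum_{m\in\Z}\frac{a_0^{|m|}}{1-a_0^2}e^{im\omega}$ (uniformly convergent since $a_0<1$, so the term-by-term integration is legitimate), orthogonality gives the closed form
\begin{equation*}
W_{OU}(\vartheta)=\frac{1+a^2-2aa_0}{1-a_0^2}=1+\frac{(a-a_0)^2}{1-a_0^2},
\end{equation*}
and uniqueness of the minimizer follows from strict convexity in $a$ together with the fact that $\vartheta\mapsto e^{\vartheta\Delta}$ is a strictly increasing bijection (as $\Delta>0$). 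Your computation checks out, and it buys something the paper's citation does not: an explicit formula for the limit Whittle function, including its curvature $1/(1-a_0^2)$ at the minimum and the minimal value $1$, which is self-contained and could in principle be used for finer asymptotics; its drawback is that it does not generalize beyond the CAR(1) case, whereas the cited proposition is exactly the tool one needs for higher-order transfer functions.
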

\begin{proof}
	Proposition 8 of the Supplementary Material of \cite{Fasen:Mayer:2020} implies that under Assumptions \ref{as_D1}, \ref{as_D5} and \ref{as_D7}
$$W_{OU}(\vartheta_0)=1<\frac 1 {2\pi}\int_{-\pi}^{\pi}\frac{\left|\Pi^{(\Delta)}(e^{i\omega},\vartheta)\right|^2}{\left|\Pi^{(\Delta)}(e^{i\omega},\vartheta_0)\right|^2}d\omega=W_{OU}(\vartheta) \quad
\text{ for }\vartheta\not=\vartheta_0.$$ Hence, $\vartheta_0$ is indeed the unique minimum.
	\end{proof}

\begin{theorem} \label{consistent:OU}
	Let the assumptions of \Cref{OU} hold. Then, as $n\to \infty$, $$\widehat \vartheta_n^{(\Delta, \alpha)} \overset{\P}{\longrightarrow}\vartheta_0.$$
\end{theorem}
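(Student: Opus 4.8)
The plan is to apply an argmin continuous mapping argument to the functional convergence established in \Cref{OU}. Since the target $\vartheta_0$ is deterministic, it suffices to prove $\widehat\vartheta_n^{(\Delta,\alpha)}\overset{\mathcal D}{\longrightarrow}\vartheta_0$, because convergence in distribution to a constant is equivalent to convergence in probability to that constant. The decisive structural feature of the OU case, which is exactly what fails for general CARMA processes, is that the limit process factorizes as $W_{OU}(\vartheta)\,S_{\alpha/2}^*$ into the deterministic, continuous function $W_{OU}$ and a single positive random variable $S_{\alpha/2}^*$ that does not depend on $\vartheta$.

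First I would record that, for almost every realization, the path $\vartheta\mapsto W_{OU}(\vartheta)\,S_{\alpha/2}^*$ has a unique minimizer at $\vartheta_0$. Indeed, by \Cref{OU} the variable $S_{\alpha/2}^*$ is a positive $\alpha/2$-stable random variable, hence $S_{\alpha/2}^*>0$ almost surely, so multiplication by it preserves the location of minima; by the preceding proposition $W_{OU}$ attains its unique minimum at $\vartheta_0$ with $W_{OU}(\vartheta_0)=1$. Consequently the argmin of the limit process equals $\vartheta_0$ almost surely.

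Next I would transfer this to the minimizers $\widehat\vartheta_n^{(\Delta,\alpha)}$ of $W_n^{(\alpha)}$. By \Cref{OU} we have $(W_n^{(\alpha)}(\vartheta))_{\vartheta\in\Theta}\overset{\mathcal D}{\longrightarrow}(W_{OU}(\vartheta)\,S_{\alpha/2}^*)_{\vartheta\in\Theta}$ in $(\mathcal C(\Theta),\|\cdot\|_\infty)$, and $\Theta$ is compact by \ref{as_D1}. Invoking Skorokhod's representation theorem, I would pass to versions $\widetilde W_n^{(\alpha)}$ and $W_{OU}\,\widetilde S$ on a common probability space with $\|\widetilde W_n^{(\alpha)}-W_{OU}\widetilde S\|_\infty\to 0$ almost surely. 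On the full-probability event where $\widetilde S>0$, fix $\delta>0$ and set $B_\delta:=\{\vartheta\in\Theta:|\vartheta-\vartheta_0|\geq\delta\}$. Compactness of $B_\delta$ together with the strict unique minimum of $W_{OU}$ at $\vartheta_0$ gives $\eta:=\min_{\vartheta\in B_\delta}W_{OU}(\vartheta)-1>0$, so $\min_{\vartheta\in B_\delta}W_{OU}(\vartheta)\widetilde S=(1+\eta)\widetilde S>\widetilde S=(W_{OU}\widetilde S)(\vartheta_0)$. Uniform convergence then forces, for all large $n$, $\min_{\vartheta\in B_\delta}\widetilde W_n^{(\alpha)}(\vartheta)>\widetilde W_n^{(\alpha)}(\vartheta_0)$, whence every minimizer of $\widetilde W_n^{(\alpha)}$ lies in $\{\vartheta\in\Theta:|\vartheta-\vartheta_0|<\delta\}$. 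Thus any measurable selection of the argmin of $\widetilde W_n^{(\alpha)}$ converges to $\vartheta_0$ almost surely; since it shares the law of $\widehat\vartheta_n^{(\Delta,\alpha)}$, this yields $\widehat\vartheta_n^{(\Delta,\alpha)}\overset{\mathcal D}{\longrightarrow}\vartheta_0$ and hence the claimed convergence in probability.

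The main obstacle is purely technical: making the argmin continuous mapping rigorous, that is, ensuring that $\widehat\vartheta_n^{(\Delta,\alpha)}$ is a genuine measurable selection of the (possibly non-unique) minimizer set of $W_n^{(\alpha)}$ and that the Skorokhod transfer respects its law. Both points are standard once one notes that $\Theta$ is compact and $W_n^{(\alpha)}$ is almost surely continuous, so a measurable minimizer exists and the whole argmin set of $\widetilde W_n^{(\alpha)}$ collapses into arbitrarily small neighborhoods of $\vartheta_0$. Conceptually, however, everything hinges on the factorization $W_{OU}(\vartheta)\,S_{\alpha/2}^*$: the parameter-independence of the random factor $S_{\alpha/2}^*$ is precisely what guarantees a deterministic unique minimizer and thereby consistency, whereas its absence in the general CARMA limit $W^{(\alpha)}(\vartheta)$ of \Cref{consistency3} is exactly what obstructs consistency in that setting.
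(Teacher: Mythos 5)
Your proposal is correct and follows essentially the same route as the paper: Skorokhod representation applied to the functional convergence of \Cref{OU}, an argmin-consistency argument exploiting that $S_{\alpha/2}^*>0$ almost surely and that $W_{OU}$ has its unique minimum at $\vartheta_0$, and finally the observation that convergence in distribution to a constant yields convergence in probability. The only difference is that you write out explicitly the compactness/uniform-convergence argmin step, which the paper handles by citing the proof of Theorem 1 of \cite{Fasen:Mayer:2020}.
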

\begin{proof}
	\Cref{OU} and the Skorokhods representation theorem give that there exists a probability space with processes $(W_n^*(\vartheta))_{\vartheta \in \Theta}$ and $(W^*(\vartheta))_{\vartheta \in \Theta}$ having the same distributions as $(W_n^{(\alpha)}(\vartheta))_{\vartheta \in \Theta}$ and $(W^{(\alpha)}(\vartheta))_{\vartheta \in \Theta}$, respectively, with $$\sup_{\vartheta \in \Theta}|W_n^*(\vartheta)-W^*(\vartheta)|\overset{a.s.}{\longrightarrow}0, \quad n\to \infty.$$
With the same arguments as in the proof of Theorem 1 of \cite{Fasen:Mayer:2020}, we can show that the minimizing arguments $\widehat\vartheta_n^*$ and $\widehat\vartheta_0^*$ of $(W_n^*(\vartheta))_{\vartheta \in \Theta}$ and $(W^*(\vartheta))_{\vartheta \in \Theta}$, respectively, satisfy, as $n\to \infty$, $$\vartheta_n^*\overset{a.s.}{\longrightarrow}\vartheta_0,$$ which then implies $\widehat\vartheta_n^{(\Delta,\alpha)}\overset{\mathcal{D}}\longrightarrow \vartheta_0$. Since $\vartheta_0$ is a constant, convergence in distribution implies  convergence in probability.
	\end{proof}

\subsection{Whittle estimation for general symmetric $\alpha$-stable CARMA processes}

\begin{theorem}
Consider the setting of \Cref{consistency3} for a symmetric $\alpha$-stable CARMA(p,q) process with $p\geq 2$. Then, in general, the limit function $W^{(\alpha)}$ of the  Whittle function does not have
a unique minimum in $\vartheta_0$ and hence, the Whittle estimator is not consistent.
\end{theorem}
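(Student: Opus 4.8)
The plan is to read off the sign of $W^{(\alpha)}(\vartheta)-W^{(\alpha)}(\vartheta_0)$ from \Cref{Lemma:4.2} and to show that, unlike in the CAR(1) case, the function $G_{\vartheta,\vartheta_0}$ generically changes sign once $p\geq 2$. Recall that $W^{(\alpha)}(\vartheta)-W^{(\alpha)}(\vartheta_0)\sim S_{\alpha/2}(\sigma_{\vartheta,\vartheta_0},\beta_{\vartheta,\vartheta_0},0)$ with $\alpha/2\in(0,1)$. Such a stable law is supported on $[0,\infty)$ if and only if $\beta_{\vartheta,\vartheta_0}=1$, i.e. if and only if $\int_0^\Delta (G_{\vartheta,\vartheta_0}^-(s))^{\alpha/2}\,ds=0$, which is equivalent to $G_{\vartheta,\vartheta_0}\geq 0$ Lebesgue-a.e. on $[0,\Delta]$. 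Conversely, as soon as $G_{\vartheta,\vartheta_0}<0$ on a set of positive measure one has $\beta_{\vartheta,\vartheta_0}<1$, so the support of $W^{(\alpha)}(\vartheta)-W^{(\alpha)}(\vartheta_0)$ reaches below $0$ and $\P(W^{(\alpha)}(\vartheta)<W^{(\alpha)}(\vartheta_0))>0$. Hence it suffices to exhibit a single $\vartheta_1\neq\vartheta_0$ for which $G_{\vartheta_1,\vartheta_0}$ is negative on a set of positive measure: on the event $\{W^{(\alpha)}(\vartheta_1)<W^{(\alpha)}(\vartheta_0)\}$, which has positive probability, $\vartheta_0$ is not a global minimiser, so $W^{(\alpha)}$ does not have a unique minimum at $\vartheta_0$. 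Non-consistency then follows by reversing the argmin argument used in \Cref{consistent:OU}: a consistent estimator would force the distributional limit in \Cref{consistency3} to be uniquely minimised at $\vartheta_0$.

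The reason such a $\vartheta_1$ exists for $p\geq 2$ is the breakdown of the factorisation that rescues the CAR(1) case. Writing, for $s\in(0,\Delta]$, $\sum_{j=-\infty}^\infty g(\Delta j-s)e^{-ij\omega}=c^\top e^{-As}\,e^{A\Delta}(I_p-e^{A\Delta}e^{-i\omega})^{-1}e_p$, one sees that for $p=1$ the factor $e^{-As}=e^{-\vartheta_0 s}$ is scalar and separates from $\omega$, so that $|\sum_j g(\Delta j-s)e^{-ij\omega}|^2=e^{-2\vartheta_0 s}\,r(\omega)$ with $r(\omega)=e^{2\vartheta_0\Delta}|\Pi^{(\Delta)}(e^{i\omega},\vartheta_0)|^{-2}\geq 0$. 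Consequently $G_{\vartheta,\vartheta_0}(s)=e^{-2\vartheta_0 s}\,e^{2\vartheta_0\Delta}(W_{OU}(\vartheta)-1)$ inherits the constant sign of the bracket, which is nonnegative by the spectral/Kolmogorov inequality invoked in \Cref{OU}. For $p\geq 2$ the matrix exponential $e^{-As}$ mixes several modes, the $s$- and $\omega$-dependence of $|\sum_j g(\Delta j-s)e^{-ij\omega}|^2$ no longer separates, and this rigidity is lost.

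To convert this into a proof I would use a local perturbation $\vartheta_1=\vartheta_0+\epsilon v$. With $H_\vartheta(s):=\frac{1}{2\pi}\int_{-\pi}^\pi|\Pi^{(\Delta)}(e^{i\omega},\vartheta)|^2\,|\sum_j g(\Delta j-s)e^{-ij\omega}|^2\,d\omega$ a Taylor expansion gives $G_{\vartheta_1,\vartheta_0}(s)=\epsilon\,v^\top\nabla_\vartheta H_{\vartheta_0}(s)+O(\epsilon^2)$ uniformly in $s\in[0,\Delta]$ (the maps being smooth by \ref{as_D9}). The decisive point is that the first-order term has zero mean over $[0,\Delta]$: since $\int_0^\Delta|\sum_j g(\Delta j-s)e^{-ij\omega}|^2\,ds\propto|\Pi^{(\Delta)}(e^{i\omega},\vartheta_0)|^{-2}$, one gets $\int_0^\Delta H_\vartheta(s)\,ds\propto\frac{1}{2\pi}\int_{-\pi}^\pi|\Pi^{(\Delta)}(e^{i\omega},\vartheta)|^2|\Pi^{(\Delta)}(e^{i\omega},\vartheta_0)|^{-2}\,d\omega$, which by the same inequality is minimised at $\vartheta_0$; hence $\int_0^\Delta\nabla_\vartheta H_{\vartheta_0}(s)\,ds=\nabla_\vartheta\int_0^\Delta H_{\vartheta_0}(s)\,ds=0$. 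Therefore, whenever $s\mapsto v^\top\nabla_\vartheta H_{\vartheta_0}(s)$ is not identically zero, being continuous with vanishing integral it is strictly negative on a nonempty open subinterval, and by the uniform remainder bound $G_{\vartheta_1,\vartheta_0}$ is then negative on a set of positive measure for all sufficiently small $\epsilon>0$, as required.

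The main obstacle is the non-degeneracy step: one must ensure that the direction $v$ can be chosen so that $v^\top\nabla_\vartheta H_{\vartheta_0}(\cdot)\not\equiv 0$, i.e. that the $s$- and $\omega$-variables in $|\sum_j g(\Delta j-s)e^{-ij\omega}|^2$ genuinely fail to separate for $p\geq 2$, excluding the degenerate CAR(1)-type cancellation. I would settle this either by the mode-mixing argument above combined with the identifiability Assumption~\ref{as_D6}, or---more robustly for a claim phrased ``in general''---by fixing one explicit CARMA$(2,q)$ model (e.g.\ a CAR(2) process with real, distinct autoregressive roots) and checking directly that the associated $G_{\vartheta_1,\vartheta_0}$ attains negative values for a suitable nearby $\vartheta_1$; this simultaneously furnishes the concrete counterexample that the theorem asks for.
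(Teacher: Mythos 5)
Your first paragraph is exactly the paper's own argument: by \Cref{Lemma:4.2}, $W^{(\alpha)}(\vartheta)-W^{(\alpha)}(\vartheta_0)\sim S_{\alpha/2}(\sigma_{\vartheta,\vartheta_0},\beta_{\vartheta,\vartheta_0},0)$ with $\alpha/2<1$, so an almost surely unique minimum at $\vartheta_0$ forces $\beta_{\vartheta,\vartheta_0}=1$ for every $\vartheta\neq\vartheta_0$, while $\beta_{\vartheta,\vartheta_0}<1$ (equivalently, $G_{\vartheta,\vartheta_0}<0$ on a set of positive Lebesgue measure) gives $\P\bigl(W^{(\alpha)}(\vartheta)<W^{(\alpha)}(\vartheta_0)\bigr)>0$. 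This reduction is correct, and your passage from non-uniqueness of the minimum to inconsistency is at the same (informal) level as the paper's. The paper then completes the proof by precisely your fallback option: it exhibits concrete models, the CARMA$(2,0)$ process of \Cref{Bsp1} and the CARMA$(2,1)$ process of \Cref{Bsp2}, and verifies (numerically, via the plotted $\beta^-_{\vartheta,\vartheta_0}$) that $\beta_{\vartheta,\vartheta_0}<1$ for a range of $\vartheta\neq\vartheta_0$.

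The gap is that this existence step --- the entire content of the theorem beyond \Cref{Lemma:4.2} --- is never actually carried out in your proposal. Your primary route contains a correct and genuinely nice observation: since $\int_0^\Delta\bigl|\sum_j g(\Delta j-s)e^{-ij\omega}\bigr|^2ds\propto|\Pi^{(\Delta)}(e^{i\omega},\vartheta_0)|^{-2}$, one gets $\int_0^\Delta H_\vartheta(s)\,ds\propto\frac{1}{2\pi}\int_{-\pi}^{\pi}|\Pi^{(\Delta)}(e^{i\omega},\vartheta)|^2\,|\Pi^{(\Delta)}(e^{i\omega},\vartheta_0)|^{-2}\,d\omega$, which is uniquely minimized at the interior point $\vartheta_0$, so $\int_0^\Delta\nabla_\vartheta H_{\vartheta_0}(s)\,ds=0$, and a continuous, not identically vanishing function with zero integral must be strictly negative on an open set. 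But the remaining non-degeneracy, namely $v^\top\nabla_\vartheta H_{\vartheta_0}(\cdot)\not\equiv 0$ for some direction $v$, is not a deferrable technicality: it is exactly where the hypothesis $p\geq 2$ must enter, since your own CAR$(1)$ computation shows $\nabla_\vartheta H_{\vartheta_0}(s)=e^{2\vartheta_0(\Delta-s)}\nabla_\vartheta W_{OU}(\vartheta_0)=0$ identically in $s$, and you give no proof of it for any model with $p\geq 2$ (the appeal to ``mode mixing'' and \ref{as_D6} is only a heuristic). Consequently, neither your main route nor your unexecuted fallback produces a single $\vartheta_1$ with $\beta_{\vartheta_1,\vartheta_0}<1$. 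To close the argument at the paper's level of rigor, do what the paper does: fix, e.g., the CARMA$(2,0)$ model with $a_\vartheta(z)=z^2-(\vartheta-2)z-2\vartheta$, $c_\vartheta(z)=\vartheta-2$, $\vartheta_0=-3$, and check that $G_{\vartheta_1,\vartheta_0}$ is negative on a set of positive measure for some $\vartheta_1$. Conversely, if you could actually prove the non-degeneracy for a class of models, your perturbation argument would yield something stronger than the paper's examples, namely inconsistency witnessed by parameters $\vartheta_1$ in every neighbourhood of $\vartheta_0$.
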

\begin{proof}
A necessary condition for the  Whittle function $W^{(\alpha)}$ to have a unique minimum in $\vartheta_0$
is that $W^{(\alpha)}(\vartheta)-W^{(\alpha)}(\vartheta_0)$ is a positive random variable for $\vartheta\not=\vartheta_0$
and, hence $\beta_{\vartheta,\vartheta_0}$ as defined in \eqref{Lemma:4.2} is equal to $1$.

However, this is not the case in general as can be seen in \Cref{Bsp1} and \Cref{Bsp2}, which implies that the  Whittle estimator
is in general not consistent.
\end{proof}

\begin{example}\label{Bsp1}~
 We tackle the question, whether it is possible to find a model where $\beta_{\vartheta,\vartheta_0}$ is not equal to 1 for some $\vartheta \neq \vartheta_0$. Therefore, we consider symmetric $3/2$-stable CARMA(2,0) processes with autoregressive and moving average polynomial 
 $$a_\vartheta(z)=z^2-(\vartheta-2)z-2\vartheta\quad \text{and} \quad c_\vartheta(z)=\vartheta-2,$$
 respectively.
These CARMA processes have the state space representation
  \begin{equation*}
{\rm d}X_t(\vartheta) = A(\vartheta) X_t(\vartheta)\,{\rm d}t + e_2\,{\rm d}L_t^{3/2}\quad \text{ and }\quad
Y_t(\vartheta) :=c(\vartheta)^\top X_t(\vartheta),\quad t\geq 0,
\end{equation*} where $$A(\vartheta)=\begin{pmatrix}
0     & 1        \\
 2\vartheta& \vartheta-2         \\
\end{pmatrix} \quad\text{and}\quad c(\vartheta)^\top=(\vartheta-2, 0).$$
  The true parameter is $\vartheta_0=-3$.
The behaviour of $\beta_{\vartheta,\vartheta_0}$ as defined in \Cref{Lemma:4.2}, the behaviour of  the non-normalized positive part $$\beta^+_{\vartheta,\vartheta_0}:={\int_{0}^{\Delta}(G_{\vartheta,\vartheta_0}^+(s))^{\alpha/2}ds}$$ and
the negative part $$ \beta^-_{\vartheta,\vartheta_0}:={\int_{0}^{\Delta}(G_{\vartheta,\vartheta_0}^-(s))^{\alpha/2}ds},$$
respectively,
 are plotted as functions of $\vartheta$ {for $\alpha=1.5$} in \Cref{Fig1}. As one can see, $\beta_{\vartheta,\vartheta_0}^->0$ for all $\vartheta\in (-\infty,-3)\cup (-3,-2)$,
and hence, $\beta_{\vartheta,\vartheta_0}<1$. {Of course, this holds independent of the choice of $\alpha$.} Thus,
$W^{(\alpha)}(\vartheta)-W^{(\alpha)}(\vartheta_0)$ is not a strictly positive random variable for $\vartheta\in (-\infty,-3)\cup (-3,-2)$ and hence,
has not almost surely a unique minimum in $\vartheta_0$. Especially, $\beta_{\vartheta,\vartheta_0}\to 0.8$  for $\vartheta \to -\infty$ {in the case $\alpha=1.5$}.
\begin{figure}[h!]
	\centering
	\includegraphics[scale=0.5]{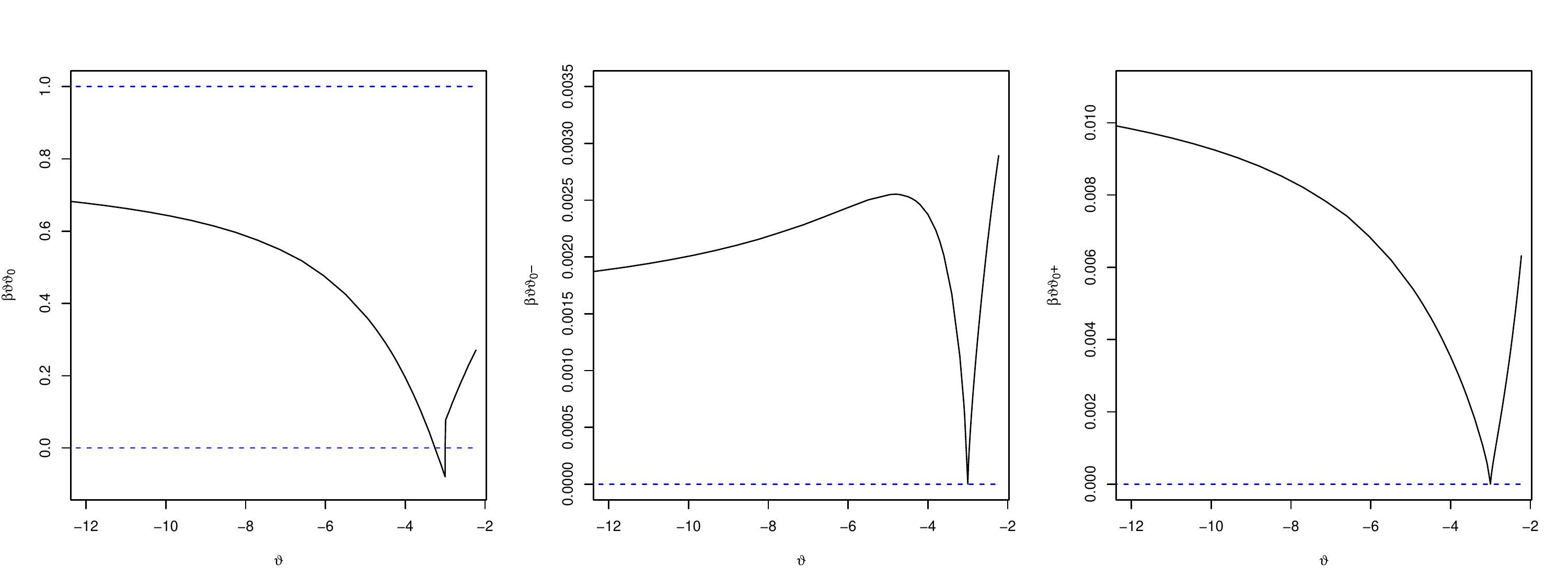}
	\caption{Behaviour of $\beta_{\vartheta,\vartheta_0}, \beta_{\vartheta,\vartheta_0}^-$ and $\beta_{\vartheta,\vartheta_0}^+$ in the CARMA$(2,0)$ model of \Cref{Bsp1}. We set $\beta_{\vartheta_0,\vartheta_0}=0$ to guarantee that $\beta_{\vartheta,\vartheta_0}$ is continuous.} \label{Fig1}
\end{figure}
\end{example}

\begin{example}\label{Bsp2}~
In view of Example 3.3 of \cite{Garcia:Klu:Mueller:2011}, we consider CARMA(2,1) processes with the parametrization \eqref{Equation 1.1}
and $a_1(\vartheta)=\vartheta_1,\ a_2(\vartheta)=\vartheta_2$ and $c_0(\vartheta)=\vartheta_3$, $c_1(\vartheta)=1$, $\vartheta=(\vartheta_1,\vartheta_2,\vartheta_3)\in\Theta$. The kernel function $g_\vartheta$ in \eqref{kernel} has the representation
$$g_\vartheta(t)=\frac{(\vartheta_3-\lambda^+(\vartheta))}{\sqrt{\vartheta_1^2-4\vartheta_2}}e^{-\lambda^+(\vartheta)t}-\frac{(\vartheta_3-\lambda^-(\vartheta))}{{\sqrt{\vartheta_1^2-4\vartheta_2}}}
    e^{-\lambda^-(\vartheta)t},\quad t\geq 0, $$ with $\lambda^+(\vartheta)=\frac{\vartheta_1+\sqrt{\vartheta_1^2-4\vartheta_2}}{2}$ and $\lambda^-(\vartheta)=\frac{\vartheta_1-\sqrt{\vartheta_1^2-4\vartheta_2}}{2} $.  As in \cite{Garcia:Klu:Mueller:2011}, the true parameter is chosen as  $$\vartheta_0=(1.9647,0.0893,0.1761).$$
    Therefore, the kernel function is
$$g_{\vartheta_0}(t)\approx 0.0692 e^{-0.0465t}+0.9307 e^{-1.9181t},\quad t\geq 0,$$ which is non-negative {and we take $\alpha=1.5$}.
In this setting, we calculate $\beta_{\vartheta,\vartheta_0}$ as a function of the components $\vartheta_1, \vartheta_2$ and $\vartheta_3$, respectively, where we fix the other two variables. Then the functions
$\beta_{\vartheta,\vartheta_0},\beta^-_{\vartheta,\vartheta_0}$ and $\beta^+_{\vartheta,\vartheta_0}$ are plotted in \Cref{Figure2}. In all three cases, the plots show that $\beta^-_{\vartheta,\vartheta_0}>0$  for some $\vartheta \neq\vartheta_0$ implying $\beta_{\vartheta,\vartheta_0}<1$. Therefore,  if we only
allow a single parameter to vary, the  Whittle estimator converges to a function which has not an unique minimum in the true parameter. Hence,  the  Whittle estimator is not consistent.
Again this statement is independent of the choice of $\alpha$.
\begin{figure}[h!]
	\centering
	\includegraphics[scale=0.5]{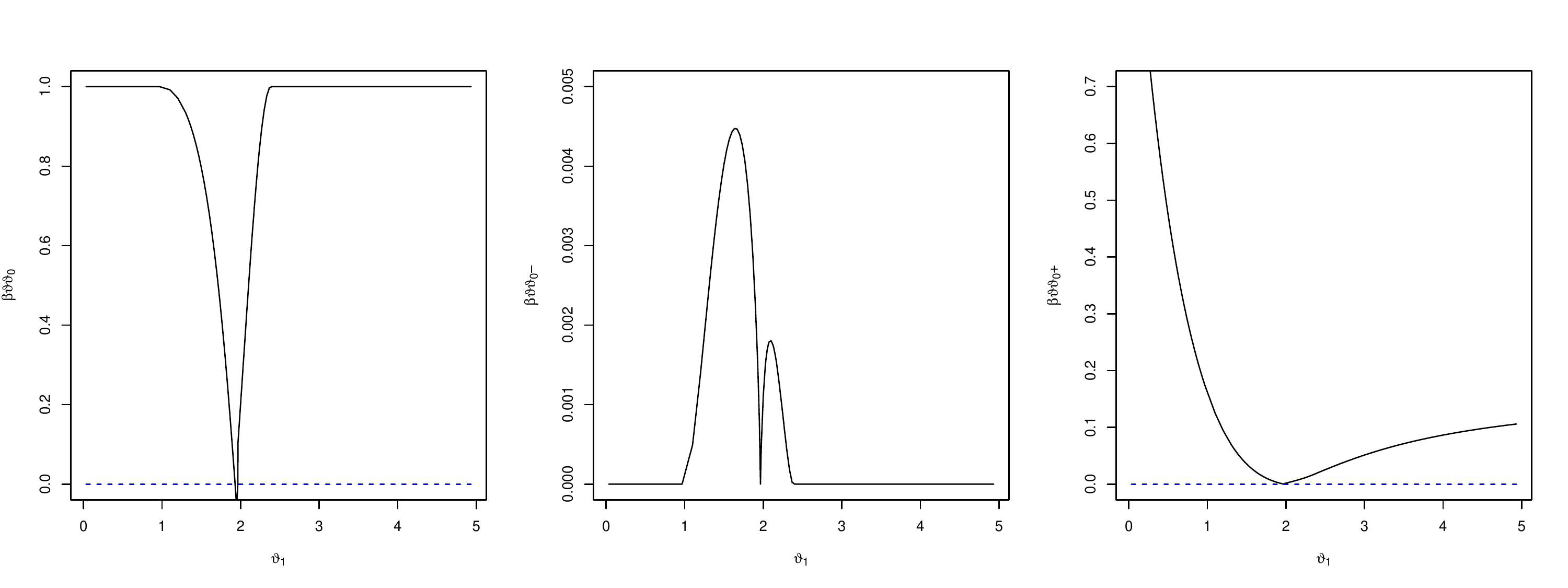}
	\includegraphics[scale=0.5]{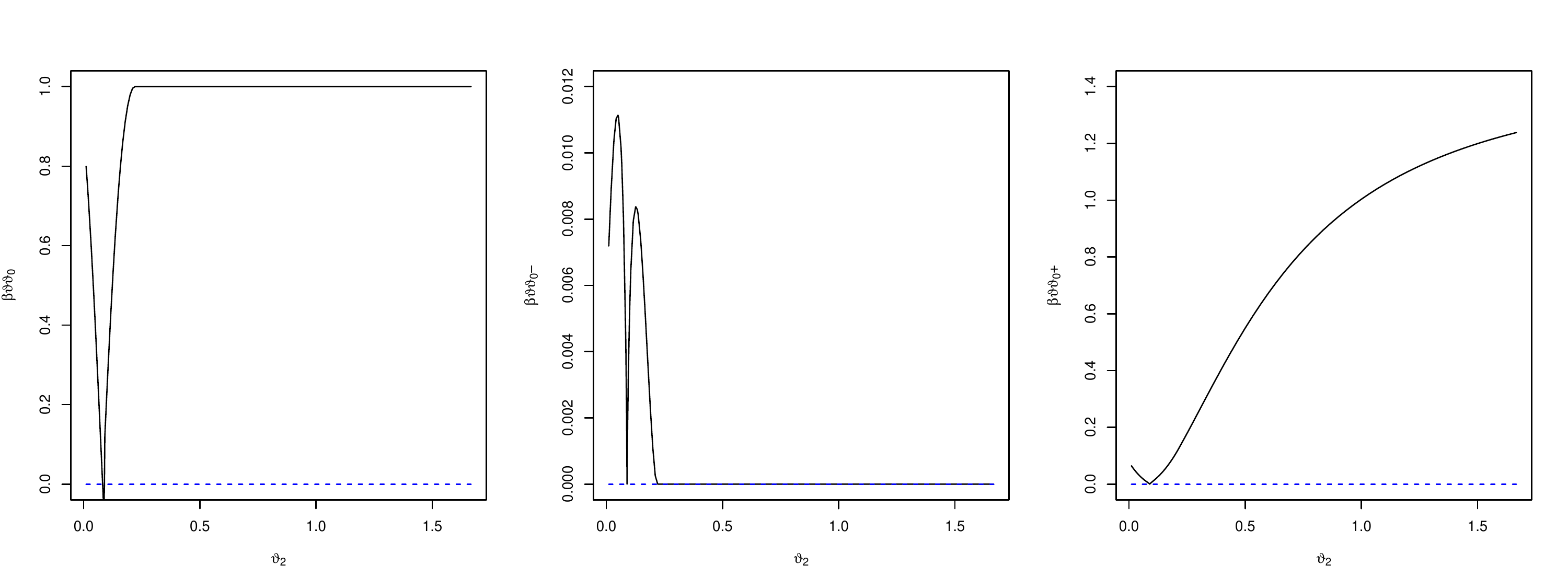}
	\includegraphics[scale=0.5]{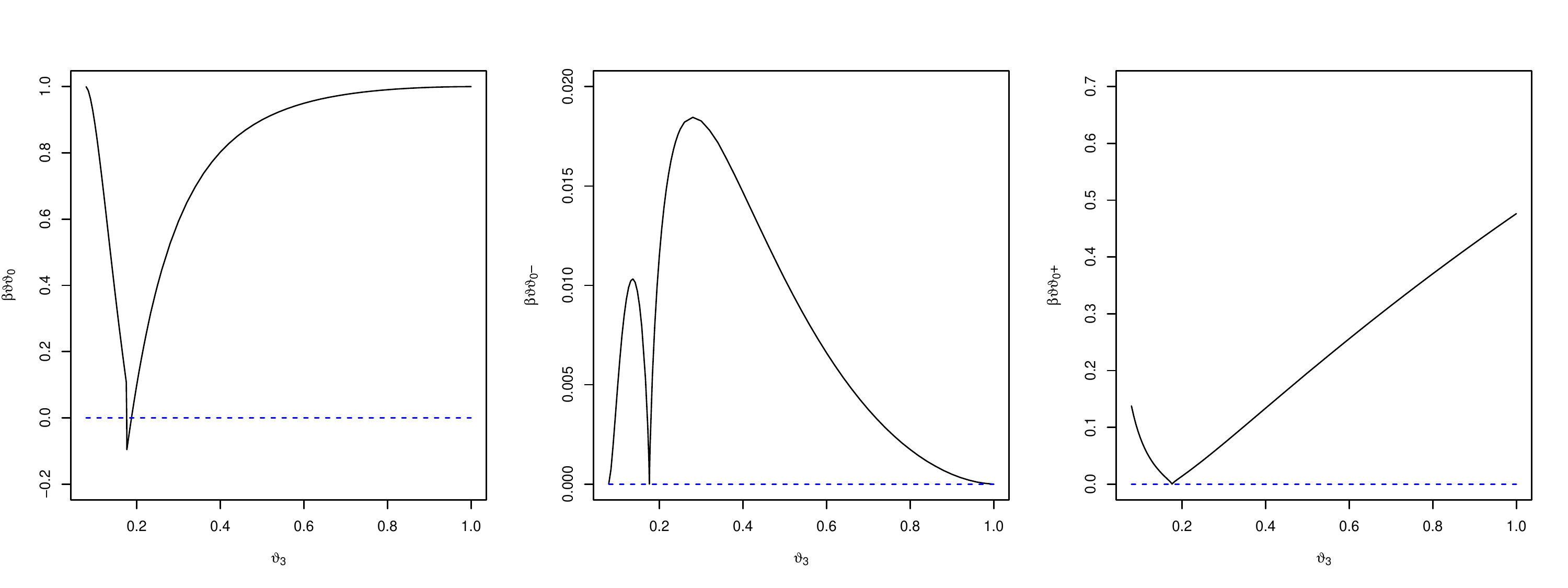}
	\caption{behaviour of $\beta_{\vartheta,\vartheta_0}, \beta_{\vartheta,\vartheta_0}^-$ and $\beta_{\vartheta,\vartheta_0}^+$ in the CARMA$(2,1)$ model of \Cref{Bsp2} where $\vartheta$ originates from $\vartheta_0$ when we fix two components and vary the third one. We set $\beta_{\vartheta_0,\vartheta_0}=0$ to guarantee that $\beta_{\vartheta,\vartheta_0}$ is continuous.} \label{Figure2}
\end{figure}
\end{example}
\section{Simulation study}\label{Sec5}
In this section, we investigate the performance of the  Whittle estimator for finite samples through a simulation study and compare it
with the behaviour of the estimator introduced in \cite{Garcia:Klu:Mueller:2011}.

The estimator of \cite{Garcia:Klu:Mueller:2011} is based on an indirect approach. Denoting the zeros of the AR$(p)$  polynomial $a(z)$ as
$\lambda_1,\ldots,\lambda_p$ which are assumed to be distinct and defining $a_D^{(\Delta)}(z)=\prod_{j=1}^{p}(1- e^{\lambda_j\Delta}z)$, it is well known \cite[Proposition 3.1]{Garcia:Klu:Mueller:2011} that  the sampled process $Y^{(\Delta)}$ satisfies the equation
\begin{eqnarray}\label{dependent_sampled}
    a_D^{(\Delta)}(\mathsf{ B})Y_k^{(\Delta)}=U_k^{(\Delta)},\quad k \in \N,
    \end{eqnarray}
    where $(U_k^{(\Delta)})_{k\in \N}$ is a $(p-1)$-dependent sequence and $\mathsf{ B}$ is the backshift operator with $\mathsf{ B}Y_k^{(\Delta)}=Y_{k-1}^{(\Delta)}$.
    For  CARMA processes with finite second moments,  $(U_k^{(\Delta)})_{k\in\N}$ is a MA$(p-1)$ process such that $Y^{(\Delta)}$ is an ARMA$(p,p-1)$
    process with an uncorrelated but not independent white noise.
    \cite{Garcia:Klu:Mueller:2011} proposed to fit an ARMA$(p,p-1)$ model to the observations $Y_1^{(\Delta)},\ldots,Y_n^{(\Delta)}$ by standard maximum likelihood
    estimation for Gaussian ARMA models. The estimated autoregressive part of that ARMA model in discrete time is denoted by $\widehat a_D^{(\Delta)}(z)$ and the estimated
    moving average part we denote by $\widehat c_D^{(\Delta)}(z)$. The logarithmic zeros of $\widehat a_D^{(\Delta)}(z)$ divided by $-\Delta$
    are then estimators $\widehat\lambda_1,\ldots,\widehat\lambda_p$
    for the  zeros $\lambda_1,\ldots,\lambda_p$ of $a(z)$. Hence, we obtain an estimator $\widehat a(z)$ for the autoregressive polynomial $a(z)$.
     In a final step,  the MA polynomial $c(z)$ of the CARMA process is determined. Therefore the parameter $\vartheta=(\vartheta_1,\vartheta_2)$ is divided
     in two parts where $\vartheta_1$ models the AR coefficients and $\vartheta_2$ the MA coefficients of the CARMA process. Now
      the autocorrelation function $\rho_{\widehat a,\vartheta_2}^{\text{(MA)}}$  of
    $ \widehat a_D^{(\Delta)}(\mathsf{ B})Y^{(\Delta)}(\widehat \vartheta_1,\vartheta_2)$  and the autocorrelation function $\rho_{\widehat c_D^{(\Delta)}}^{\text{(MA)}}$
    of  a discrete time moving average process with moving average polynomial  $\widehat c_D^{(\Delta)}(z)$ is calculated and $\widehat\vartheta_2$
    is derived  numerically as solution of $\rho_{\widehat a_D^{(\Delta)},\widehat\vartheta_2}^{\text{(MA)}}(k)=\rho_{\widehat c_D^{(\Delta)}}^{\text{(MA)}}(k)$
    for $k=1,\ldots,q$.

In the following, we use an Euler-Maruyama scheme for differential equations with initial value $Y_0=0$ and step size $0.01$ to simulate $\alpha$-stable
CARMA processes. We set $\Delta=1$ as the distance between the discrete observations and $\alpha=1.5$ for the stable index of the driving symmetric $\alpha$-stable Lévy process.
We investigate the behaviour of the Whittle estimator
and the estimator of \cite{Garcia:Klu:Mueller:2011}  for $n=500, 2000, 5000$ based on 500 replications.

 As first example, we simulate an Ornstein-Uhlenbeck process with $\vartheta_0=-1$. The resulting sample mean, bias and sample standard deviation are given in \Cref{table_1}.
\begin{table}[h]	
	\begin{center}
		\begin{tabular}{|c||c|c|c||c|c|c||c|c|c|}\hline
			\multicolumn{10}{|c|}{Whittle, $\alpha=1.5$} \\\hline
			&  \multicolumn{3}{|c||}{$n=500$} & \multicolumn{3}{|c||}{$n=2000$}& \multicolumn{3}{|c|}{$n=5000$}\\ \hline
			\hspace*{0.1cm} \hspace*{0.1cm} &mean & bias& std.& mean & bias &std. & mean & bias &std. \\ \hline
			$\vartheta_0=-1$ &-1.0132&0.0132&0.1118&-1.0082&0.0082&0.0528&-1.0071&0.0071&0.0367  \\ \hline\hline
		\multicolumn{10}{|c|}{Estimator of Garc\'{\i}a et al., $\alpha=1.5$} \\\hline
		&  \multicolumn{3}{|c||}{$n=500$} & \multicolumn{3}{|c||}{$n=2000$}& \multicolumn{3}{|c|}{$n=5000$}\\ \hline
		\hspace*{0.1cm} \hspace*{0.1cm} &mean & bias& std.& mean & bias &std.& mean & bias &std.  \\ \hline
		$\vartheta_0=-1$ &-1.0162&0.0162&0.1018&-0.9948&0.0052&0.0522&-0.9942&0.0058&0.0333  \\ \hline
				\end{tabular}
	\end{center}
	\label{table5}
	\begin{center}
		\caption{Estimation results for a symmetric $1.5$-stable Ornstein-Uhlenbeck process with parameter $\vartheta_0=-1$. }  \label{table_1} 	
	\end{center}
\end{table}
It seems that both the Whittle estimator and the estimator of \cite{Garcia:Klu:Mueller:2011} converge to the true value. For the Whittle estimator this is consistent with \Cref{consistent:OU}.
To compare the behaviour in the heavy-tailed setting with the behaviour in the light-tailed setting, we present a second simulation
study where we  use for the driving Lévy process of the Ornstein-Uhlenbeck model  a Brownian motion. The results are given in \Cref{table_2}. As we can see, the behaviour of the  Whittle estimator and  the estimator of \cite{Garcia:Klu:Mueller:2011} are similar for the light-tailed and for the heavy-tailed Ornstein-Uhlenbeck process.

\begin{table}[h]	
	\begin{center}
		\begin{tabular}{|c||c|c|c||c|c|c||c|c|c|}\hline
			\multicolumn{10}{|c|}{Whittle, $\alpha=2$} \\\hline
			&  \multicolumn{3}{|c||}{$n=500$} & \multicolumn{3}{|c||}{$n=2000$}& \multicolumn{3}{|c|}{$n=5000$}\\ \hline
			\hspace*{0.1cm} \hspace*{0.1cm} &mean & bias& std.& mean & bias &std. & mean & bias &std. \\ \hline
			$\vartheta_0=-1$ &-1.0143&0.0143&0.1183&-1.0082&0.0082&0.0528&-1.0002&0.0002&0.0349  \\ \hline\hline
			\multicolumn{10}{|c|}{Estimator of Garc\'{\i}a et al., $\alpha=2$} \\\hline
			&  \multicolumn{3}{|c||}{$n=500$} & \multicolumn{3}{|c||}{$n=2000$}& \multicolumn{3}{|c|}{$n=5000$}\\ \hline
			\hspace*{0.1cm} \hspace*{0.1cm} &mean & bias& std.& mean & bias &std.& mean & bias &std.  \\ \hline
			$\vartheta_0=-1$ &-1.0007&0.0007&0.1133&-1.0011&0.0011&0.0568&-1.0012&0.0012&0.0351  \\ \hline
				\end{tabular}
	\end{center}
	\label{table5}
	\begin{center}
		\caption{Estimation results for a Brownian motion driven Ornstein-Uhlenbeck process with parameter $\vartheta_0=-1$. }  \label{table_2} 	
	\end{center}
\end{table}

Next, we simulate the CARMA(2,0) process of \Cref{Bsp1}. Accordingly, the true value is  $\vartheta_0=-3$.
 The results are given in \Cref{table_3}.
As already argued in
 \Cref{Bsp1} the Whittle estimator is not a consistent estimator in this situation. This is confirmed
 by the simulation study. For $n=5000$ the bias and standard deviation are even higher than for $n=2000$. The estimator of \cite{Garcia:Klu:Mueller:2011}
 behaves even worse. On the one hand, the bias and standard deviation of \cite{Garcia:Klu:Mueller:2011} are quite high and not decreasing with increasing sample size. On the other hand, the estimation procedure of \cite{Garcia:Klu:Mueller:2011} stops for every sample size for more than 1/5th of the replications. This can be traced back to an inadequate estimate of the zero of the AR polynomial, namely the real part of the estimated zero of the AR polynomial is less than 0 which means that the logarithm of this zero is not defined.

Finally, we investigate the CARMA(2,1) process of \Cref{Bsp2}, see \Cref{table_4}.
Our simulation results show the same findings as \cite{Garcia:Klu:Mueller:2011};  both estimators perform very well in this parameter setting.
However, most of the time there is one parameter which has a slightly higher bias or standard deviation
 such that it is not apparent if the estimator is converging. Indeed, for the Whittle estimator we already showed
 in \Cref{Bsp2} that this is not the case and we guess that the same holds true for the estimator of  \cite{Garcia:Klu:Mueller:2011}, although
 at the first view this seems to contradict the simulation study.
  But from the behaviour of $\beta_{\vartheta,\vartheta_0}$ in \Cref{Figure2} we know that only in a small neighbourhood of $\vartheta_0$, the random variables
 $W^{(\alpha)}(\vartheta)-W^{(\alpha)}(\vartheta_0)$ are not positive and outside this neighbourhood they are positive with probability one because
 $\beta_{\vartheta,\vartheta_0}=1$.  Although $W^{(\alpha)}(\vartheta)$
 has not a unique minimum in $\vartheta_0$, $\vartheta_0$ is close to the minimum of  $W^{(\alpha)}(\vartheta)$. Thus, the Whittle
 estimator is close to the true value $\vartheta_0$ as well.

\begin{table}	[h]
	\begin{center}
		\begin{tabular}{|c||c|c|c||c|c|c||c|c|c|}\hline
			\multicolumn{10}{|c|}{Whittle, $\alpha=1.5$} \\\hline
			&  \multicolumn{3}{|c||}{$n=500$} & \multicolumn{3}{|c||}{$n=2000$}& \multicolumn{3}{|c|}{$n=5000$}\\ \hline
			\hspace*{0.1cm} \hspace*{0.1cm} &mean & bias& std.& mean & bias &std. & mean & bias &std. \\ \hline
			$\vartheta_0=-3$ &-3.4762&0.4762&1.2741&-3.2902&0.2902& 0.9367&-3.3002&0.3002&0.9568  \\ \hline\hline
			\multicolumn{10}{|c|}{Estimator of Garc\'{\i}a et al., $\alpha=1.5$} \\\hline
			&  \multicolumn{3}{|c||}{$n=500$ }& \multicolumn{3}{|c||}{$n=2000$ }& \multicolumn{3}{|c|}{$n=5000$ }\\ \hline
			\hspace*{0.1cm} \hspace*{0.1cm} &mean & bias& std.& mean & bias &std.& mean & bias &std.  \\ \hline
			$\vartheta_0=-3$ &-3.2473& 0.2473& 1.2220&-3.8184&0.8164&1.1089&-4.0770&1.0770&0.9238 \\ \hline
					\end{tabular}
	\end{center}
	\begin{center}
		\caption{Estimation results for the symmetric $1.5$-stable CARMA(2,0) process of \Cref{Bsp1}.  } \label{table_3}
	\end{center}
\end{table}

\begin{table}[h]	
	\begin{center}
		\begin{tabular}{|c||c|c|c||c|c|c||c|c|c|}\hline
			\multicolumn{10}{|c|}{Whittle, $\alpha=1.5$} \\\hline
			&  \multicolumn{3}{|c||}{$n=500$} & \multicolumn{3}{|c||}{$n=2000$}& \multicolumn{3}{|c|}{$n=5000$}\\ \hline
			\hspace*{0.1cm} \hspace*{0.1cm} &mean & bias& std.& mean & bias &std. & mean & bias &std. \\ \hline
			$\vartheta_1=1.9647$ &1.9520& 0.0127&0.0516&1.9592&0.0055&0.0321&2.0069&0.0422&1.1890 \\
		$\vartheta_2=0.0893$&0.1031&0.0138&0.0377&0.0940&0.0047&0.0224&0.0987 &0.0094&0.0288 \\
		$\vartheta_3=0.1761$&-0.0144&0.1905&0.1836&-0.0389 &0.215&0.1681& 0.1735&0.0026&0.0224 \\ \hline\hline
			\multicolumn{10}{|c|}{Estimator of Garc\'{\i}a et al., $\alpha=1.5$} \\\hline
			&  \multicolumn{3}{|c||}{$n=500$ } & \multicolumn{3}{|c||}{$n=2000$ }& \multicolumn{3}{|c|}{$n=5000$ }\\ \hline
			\hspace*{0.1cm} \hspace*{0.1cm} &mean & bias& std.& mean & bias &std.& mean & bias &std.  \\ \hline
			$\vartheta_1=1.9647$&2.0947&0.1300&0.4480 &2.0138&0.0491&0.2405&2.0036&0.0389&0.1543 \\
				$\vartheta_2=0.0893$ &0.1462&0.0569&0.2160&0.0939& 0.0046&0.0323&0.0930&0.0037&0.0300 \\
					$\vartheta_3=0.1761$&0.2196& 0.0435&0.1333&0.1877&0.0116&0.0487&0.1920& 0.0159&0.0484\\\hline
				\end{tabular}
	\end{center}
	\label{table5}
	\begin{center}
		\caption{Estimation results for the symmetric $1.5$-stable CARMA(2,1) process of \Cref{Bsp2}. }  \label{table_4} 	
	\end{center}
\end{table}

\section{Conclusion} \label{Conclusion}

The simulation study confirms the theory that  for symmetric $\alpha$-stable CARMA$(p,q)$ processes with $p\geq 2$
the  Whittle estimator is in general not consistent. Similarly, it suggests that the estimator of
\cite{Garcia:Klu:Mueller:2011}  is  not a consistent estimator as well, although for the special parameter setting
of \Cref{Bsp2} both, the  Whittle estimator and the estimator of \cite{Garcia:Klu:Mueller:2011},  give quite reasonable results.
In case of the  Whittle estimator this effect is not surprising. \Cref{Figure2} suggests that $\P(W^{(\alpha)}(\vartheta)>W^{(\alpha)}(\vartheta_0))$
is quite high for $\vartheta\not=\vartheta_0$ such that the probability of a local minimum in $\vartheta_0$ is still high, even though  $W^{(\alpha)}$
has not a unique minimum in $\vartheta_0$.

Essentially, the  Whittle estimator is not a consistent for symmetric $\alpha$-stable CARMA$(p,q)$ processes since $(\varepsilon_k^{(\Delta)}(\vartheta))_{k\in\N }$, the noise in the MA representation \eqref{linearInnovations}, is a dependent sequence except for Ornstein-Uhlenbeck processes. Then, it is an iid sequence
and therefore, it is not astonishing that the  Whittle estimator is consistent. If the driving Lévy process has finite second moments $(\varepsilon_k^{(\Delta)}(\vartheta))_{k\in\N }$
is at least an uncorrelated sequence which is sufficient for the consistency of the Whittle estimator for CARMA processes
(see \Cref{Theorem 3.1}).

For similar reasons, we also assume that the estimator of \cite{Garcia:Klu:Mueller:2011}
is not a consistent estimator in general.
In fact, if the driving Lévy process has finite second moments, the process $(U_k^{(\Delta)})_{k\in\N}$ given in
\eqref{dependent_sampled} is a MA$(p-1)$ process with a weak white noise.
In contrast, for $\alpha$-stable CARMA$(p,q)$ models with $p\geq 2$,  the sequence  $(U_k^{(\Delta)})_{k\in\N}$
has not a representation as a \linebreak MA$(p-1)$ process with an iid noise  and due to the lack of the second
moments the sequence can not be uncorrelated as well. This dependence influences the estimator and prevents consistency.
However, if $Y$ is an Ornstein-Uhlenbeck process,
$(U_k^{(\Delta)})_{k\in\N}$ is an iid sequence which implies that the estimator of \cite{Garcia:Klu:Mueller:2011}
is consistent as well. The simulation results in \Cref{table_1} matches this insight.

In sum, a Lévy driven CARMA process with finite second moments sampled discretely has a weak ARMA representation,
which can be used to estimate the CARMA parameters as, e.g., for quasi-maximum likelihood estimation (see \cite{QMLE}) or for Whittle estimation
(see \cite{Fasen:Mayer:2020}). Beside the identifiability issue, a difficulty in this attempt is that the noise of the weak ARMA representation depends
on the model parameters as we already saw for the Ornstein-Uhlenbeck process in \Cref{sec:4.1}.
However, for heavy-tailed CARMA processes with infinite second moments
the estimators for heavy-tailed ARMA models as, e.g., the Whittle estimator do not work anymore because the noise in the ARMA representation
of the discretely sampled CARMA process is neither independent nor uncorrelated. The same phenomena was also investigated for the parameter estimation of GARCH(1,1) processes with  infinite
4th moment in \cite{Mikosch:Straumann:2002} where the Whittle estimator is inconsistent although it is consistent in the finite 4th moment case
(see \cite{Giraitis:Robinson:2001} and \cite{Mikosch:Straumann:2002}). There as well the noise of the ARMA representation of the squared GARCH(1,1) process with finite 4th moments
is   a weak white noise which is in general not independent.

In conclusion, the analogy between parameter estimation for heavy-tailed CARMA and heavy-tailed ARMA processes with infinite variance is dangerous.
For the estimation of heavy-tailed CARMA models different estimation approaches as for ARMA processes have to be developed,
although they work for CARMA processes with finite second moments. This is topic of some future research.

\begin{appendix}
\section{Asymptotic behaviour of the sample autocovariance function of symmetric $\alpha$-stable CARMA processes}

\begin{proposition}\label{GenDrapatz}
	Let
$g_1,g_2:\R\to\R$ be bounded functions with $g_1,g_2\in L^{\delta}(\R)$ for some $\delta<\min\{\alpha,1\}$ and
$0<\int_{-\infty}^{\infty}|g_1(s)g_2(s)|\,ds<\infty$.
Suppose $L^{(\alpha)}$ is a symmetric $\alpha$-stable L\'evy process with $\alpha \in (0,2)$ and $L_1^\alpha \sim S_\alpha(\sigma,0,0)$. Define the
continuous-time MA processes
$$Y_t^{[1]}=\int_{-\infty}^{\infty} g_1(t-s)dL_s^{(\alpha)} \quad \text{ and } \quad Y_t^{[2]}=\int_{-\infty}^{\infty} g_2(t-s)dL_s^{(\alpha)}, \quad t\geq 0.$$
Furthermore,   $G_{g_1,g_2}:[0,\Delta]\to \R$ is given as $s\to \sum_{j=-\infty}^{\infty}g_1(\Delta j -s)g_2(\Delta j-s)$ and suppose $G_{g_1,g_2}\in L^{(\alpha/2)}[0,\Delta]$.
Then, as $n\to\infty$,
$$\frac 1 {n^{2/\alpha}}\sum_{k=1}^{n}Y_{k\Delta }^{[1]} Y_{k\Delta }^{[2]}\overset{\mathcal D}{\longrightarrow}\int_0^\Delta G_{g_1,g_2}(s) dL_s^{(\alpha/2)},$$
where $L^{(\alpha/2)}$ is the $\alpha/2$-stable L\'evy process of \Cref{consistency3}.
	\end{proposition}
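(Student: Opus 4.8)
The plan is to reduce the bilinear statement to its \emph{diagonal} special case $g_1=g_2$ and then recover the general case by polarization. First I would record that $Y^{[1]}\pm Y^{[2]}$ are again continuous-time moving averages, namely $Y^{[1]}_t\pm Y^{[2]}_t=\int_{-\infty}^\infty (g_1\pm g_2)(t-s)\,dL_s^{(\alpha)}$, and that their kernels $g_1\pm g_2$ inherit all the hypotheses: they are bounded, and by the subadditivity of $x\mapsto|x|^\delta$ for $\delta<1$ they lie in $L^\delta(\R)$. Moreover, since each $g_i$ is bounded and in $L^\delta(\R)$ it is automatically in $L^2(\R)$ (write $|g_i|^2=|g_i|^{2-\delta}|g_i|^\delta\le\|g_i\|_\infty^{2-\delta}|g_i|^\delta$), so that $\int_0^\Delta G_{h,h}(s)\,ds=\|h\|_{L^2}^2<\infty$ for every $h\in\{g_1,g_2,g_1+g_2,g_1-g_2\}$; because $\alpha/2<1$ and $[0,\Delta]$ has finite measure, this gives $G_{h,h}\in L^1[0,\Delta]\subseteq L^{\alpha/2}[0,\Delta]$. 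Hence the diagonal result applies to each of these kernels.

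For the diagonal case itself I would invoke the limit theorem of Drapatz for the lag-zero sample autocovariance of a stable moving average, which yields, for any admissible kernel $h$,
$$\frac{1}{n^{2/\alpha}}\sum_{k=1}^n\left(Y_{k\Delta}^{[h]}\right)^2\overset{\mathcal D}{\longrightarrow}\int_0^\Delta G_{h,h}(s)\,dL_s^{(\alpha/2)},$$
with $L^{(\alpha/2)}$ exactly as in \Cref{consistency3}. The mechanism behind this statement, on which I would also lean for the joint convergence below, is that the sum is to leading order governed by the \emph{squares} of the large jumps of $L^{(\alpha)}$: writing $\{(\tau_i,J_i)\}$ for the jump point process, the sum $\sum_k(Y_{k\Delta}^{[h]})^2$ is dominated by its diagonal part $\sum_i J_i^2\sum_k h(k\Delta-\tau_i)^2\approx\sum_i J_i^2\,G_{h,h}(\tau_i\bmod\Delta)$, while the off-diagonal terms vanish in the limit thanks to the symmetry of $L^{(\alpha)}$. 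Since $J_i^2$ has tail index $\alpha/2$, the point process $\{(\tau_i\bmod\Delta,\,n^{-2/\alpha}J_i^2)\}$ converges to a Poisson point process on $[0,\Delta)\times(0,\infty)$, and the functional converges to the asserted $\alpha/2$-stable integral.

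The crux of the argument is the \emph{joint} convergence of the two diagonal sums driven by $g_1+g_2$ and $g_1-g_2$ towards a pair of stable integrals against \emph{one and the same} process $L^{(\alpha/2)}$. This is precisely what the point-process picture delivers: both sums are asymptotically continuous functionals of the single limiting Poisson point process $\{(\tau_i\bmod\Delta,\,n^{-2/\alpha}J_i^2)\}$, the only difference being the integrand $G_{g_1+g_2,g_1+g_2}$ versus $G_{g_1-g_2,g_1-g_2}$. By the continuous mapping theorem the pair converges jointly, and the limiting integrals share the same driving $L^{(\alpha/2)}$ because they arise from the same squared large jumps.

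Finally I would combine the two limits through the polarization identity $Y_{k\Delta}^{[1]}Y_{k\Delta}^{[2]}=\tfrac14\big((Y_{k\Delta}^{[1]}+Y_{k\Delta}^{[2]})^2-(Y_{k\Delta}^{[1]}-Y_{k\Delta}^{[2]})^2\big)$. Dividing by $n^{2/\alpha}$, passing to the joint limit, and using the bilinearity of $G$ in the form $G_{g_1+g_2,g_1+g_2}-G_{g_1-g_2,g_1-g_2}=4\,G_{g_1,g_2}$ collapses the difference of the two integrals against the common $L^{(\alpha/2)}$ into $\int_0^\Delta G_{g_1,g_2}(s)\,dL_s^{(\alpha/2)}$, which is the claim. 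The main obstacle I anticipate is the negligibility of the off-diagonal jump contributions underlying the diagonal result; this is the genuinely technical step, requiring a truncation of the small jumps together with the cancellation coming from the symmetry and independence of the increments of $L^{(\alpha)}$, and it is exactly the part that Drapatz's theorem already supplies.
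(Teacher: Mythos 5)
Your strategy---polarize, apply the diagonal (lag-zero sample autocovariance) result to the kernels $g_1+g_2$ and $g_1-g_2$, and recombine---is sound, and it is genuinely different from the paper's route: the paper proves \Cref{GenDrapatz} by adapting the proof of Theorem 3 of Drapatz directly to the bilinear sum $\sum_k Y^{[1]}_{k\Delta}Y^{[2]}_{k\Delta}$, via joint regular variation of the pair of moving averages, the product map $(x_1,x_2,\ldots,x_{2k-1},x_{2k})\mapsto(x_1x_2,\ldots,x_{2k-1}x_{2k})$, and an $m$-dependent truncation, rather than reducing to the quadratic case. Your verification that $g_1\pm g_2$ inherit the hypotheses is correct (subadditivity of $|\cdot|^\delta$ for $\delta<1$; boundedness plus $L^\delta$ gives $L^2$; Tonelli gives $G_{h,h}\in L^1[0,\Delta]\subseteq L^{\alpha/2}[0,\Delta]$ on the finite interval), and the identity $G_{g_1+g_2,g_1+g_2}-G_{g_1-g_2,g_1-g_2}=4\,G_{g_1,g_2}$ holds pointwise a.e., since the defining series converge absolutely a.e.\ by Cauchy--Schwarz ($G_{g_i,g_i}<\infty$ a.e.).

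The one substantive caveat concerns the step you yourself flag as the crux: the \emph{joint} convergence of the two quadratic sums towards integrals against one and the same $L^{(\alpha/2)}$. This is not contained in the \emph{statement} of the diagonal theorem (applied twice it only yields the two marginal limits), and it cannot be recovered by Cram\'er--Wold either, because a linear combination $a\sum_k(Y^{[1]}_{k\Delta}+Y^{[2]}_{k\Delta})^2+b\sum_k(Y^{[1]}_{k\Delta}-Y^{[2]}_{k\Delta})^2$ is not itself a sum of the form covered by that theorem. So you must reopen the point-process proof: Drapatz's argument shows that each normalized sum equals an (after truncation of small points, a.e.\ continuous) functional of the single point process $N_n=\sum_i\delta_{(\tau_i\bmod\Delta,\,n^{-2/\alpha}J_i^2)}$ plus $o_{\P}(1)$, and one application of the continuous mapping theorem to $N_n\Rightarrow N$ then gives the joint limit with common driver, exactly as you sketch. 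This is legitimate, but note that once you are inside that machinery the bilinear sum $\sum_k Y^{[1]}_{k\Delta}Y^{[2]}_{k\Delta}$ can be handled directly---which is what the paper does, and indeed the paper's \Cref{CorDrapatz} exploits the bilinear form in the opposite direction, feeding linear-combination kernels into \Cref{GenDrapatz} to get joint lags by Cram\'er--Wold---so the polarization relocates rather than reduces the technical burden. Finally, two degenerate cases deserve a one-line remark: if $g_1=-g_2$ a.e.\ (resp.\ $g_1=g_2$ a.e.), the kernel $g_1+g_2$ (resp.\ $g_1-g_2$) violates the positivity hypothesis $\int h^2>0$ needed for the diagonal theorem; there the corresponding quadratic sum vanishes identically and the claim is just the diagonal result for the other kernel, up to sign.
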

\begin{proof}
	The proof is mostly the same as the proof of the asymptotic behaviour of the sample autocovariance function of a
continuous-time moving average process in Theorem 3 of \cite{Drapatz:2017}
{and is therefore omitted.}
\end{proof}

	\begin{remark}\label{RemDrapatz}
		Note that  $$\int_0^\Delta G_{g_1,g_2}(s) dL_s^{(\alpha/2)} \sim S_{\alpha/2}(\sigma_{g_1,g_2},\beta_{g_1,g_2},0),$$
		 is an ${\alpha/2}$-stable distribution with parameters
    \begin{eqnarray*}
		\beta_{g_1,g_2}&=&\frac{\int_{0}^{\Delta}(G_{g_1,g_2}^+(s))^{\alpha/2}-(G_{g_1,g_2}^-(s))^{\alpha/2}ds}{\int_0^{\Delta}|G_{g_1,g_2}(s)|^{\alpha/2}ds},\\
		\sigma_{g_1,g_2}^{\alpha/2}&=&\frac{\sigma^{\alpha}C_\alpha}{C_{\alpha/2}} \int_0^\Delta |G_{g_1,g_2}(s)|^{\alpha/2}ds,
    \end{eqnarray*}
 see Property 1.2.3 and 3.2.2 of \cite{Samrodonitsky:Taqqu:1994}.
	\end{remark}

	\begin{theorem}\label{CorDrapatz}
		Let $Y$ be a symmetric $\alpha$-stable CARMA process with kernel function \break $g(t)=c^\top\e^{A t}e_p\1_{\left[0,\infty\right)}(t)$ as given in \eqref{kernel} and $\overline \gamma_{n}(h)$, $h=-n+1,\ldots,n-1$ be the sample autocovariance function as
defined in \eqref{ACF}. Then, for fixed $m\in \N$ and as $n\to\infty$,
\beao
\lefteqn{\frac{1}{n^{2/\alpha-1}}\left(\overline \gamma_{n}(0),\ldots,\overline \gamma_n(m)\right)}\\
    &&\overset{\mathcal D}{\longrightarrow}\left(\int_0^\Delta \sum_{j=-\infty}^{\infty}g(\Delta j-s)^2 dL^{(\alpha/2)}_s,\ldots, \int_0^\Delta \sum_{j=-\infty}^{\infty}g(\Delta j-s)g(\Delta(j+m)-s) dL^{(\alpha/2)}_s\right),
\eeao where $L^{(\alpha/2)}$ is the ${\alpha}/{2}$-stable Lévy process of \Cref{consistency3}.
	\end{theorem}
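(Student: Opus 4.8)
The plan is to deduce the joint convergence of the vector $(\overline\gamma_n(0),\ldots,\overline\gamma_n(m))$ from the one-dimensional statement of \Cref{GenDrapatz} by means of the Cram\'er--Wold device. The starting observation is that
$$\frac{1}{n^{2/\alpha-1}}\overline\gamma_n(h)=\frac{1}{n^{2/\alpha}}\sum_{k=1}^{n-h}Y_{(k+h)\Delta}Y_{k\Delta},\qquad h=0,\ldots,m,$$
so each component is, up to its upper summation limit, a cross-product sum of exactly the type treated in \Cref{GenDrapatz}. Writing $Y_{(k+h)\Delta}=\int_{-\infty}^{\infty}g((k+h)\Delta-s)\,dL_s^{(\alpha)}$ shows that $(Y_{(k+h)\Delta})_{k}$ is the sampled moving average process with kernel $g(\cdot+h\Delta)$, which is bounded and exponentially decaying, hence an element of every $L^\delta(\R)$ by \ref{as_D3}.

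For arbitrary $c_0,\ldots,c_m\in\R$ I would then collect the lag structure of the linear combination into a single kernel. Setting
$$g_\ast(t):=\sum_{h=0}^{m}c_h\,g(t+h\Delta),$$
the associated sampled process satisfies $\int_{-\infty}^{\infty}g_\ast(k\Delta-s)\,dL_s^{(\alpha)}=\sum_{h=0}^{m}c_h Y_{(k+h)\Delta}$, so that
$$\sum_{h=0}^{m}c_h\,Y_{(k+h)\Delta}Y_{k\Delta}=\Big(\textstyle\int g_\ast(k\Delta-s)\,dL_s^{(\alpha)}\Big)Y_{k\Delta}.$$
The function $g_\ast$ is again bounded and lies in every $L^\delta(\R)$, and whenever $g_\ast$ does not vanish identically on $[0,\infty)$ the pair $(g_\ast,g)$ satisfies $0<\int|g_\ast(s)g(s)|\,ds<\infty$ together with $G_{g_\ast,g}\in L^{\alpha/2}[0,\Delta]$; the latter follows from the same exponential decay estimate already used in the proof of \Cref{consistency3}. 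Thus \Cref{GenDrapatz} applies to the pair $(g_\ast,g)$.

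It then remains to match the limits. After discarding the finitely many boundary terms $\frac{1}{n^{2/\alpha}}\sum_{h}c_h\sum_{k=n-h+1}^{n}Y_{(k+h)\Delta}Y_{k\Delta}$, which tend to $0$ in probability because $n^{-2/\alpha}\to0$ while their joint law is independent of $n$ by stationarity, \Cref{GenDrapatz} yields
$$\sum_{h=0}^{m}c_h\,\frac{1}{n^{2/\alpha-1}}\overline\gamma_n(h)\overset{\mathcal D}{\longrightarrow}\int_0^\Delta G_{g_\ast,g}(s)\,dL_s^{(\alpha/2)}.$$
Since $G_{g_\ast,g}(s)=\sum_{h=0}^{m}c_h\sum_{j=-\infty}^{\infty}g(\Delta(j+h)-s)g(\Delta j-s)$, the linearity of the integral with respect to $L^{(\alpha/2)}$ shows that this limit equals $\sum_{h=0}^{m}c_h\,V_h$ with $V_h:=\int_0^\Delta\sum_{j}g(\Delta j-s)g(\Delta(j+h)-s)\,dL_s^{(\alpha/2)}$, i.e. precisely the $h$-th component of the claimed limit vector. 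As this holds for every choice of $c_0,\ldots,c_m$, the Cram\'er--Wold device delivers the asserted joint convergence.

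I expect the main obstacle to be exactly the passage from marginal to joint convergence; the substitution $g_\ast=\sum_h c_h\,g(\cdot+h\Delta)$ is what makes it possible to invoke the scalar \Cref{GenDrapatz} for each linear combination separately. The only additional point requiring care is a degenerate combination for which $g_\ast$ vanishes on $[0,\infty)$ and hence $G_{g_\ast,g}\equiv0$: there \Cref{GenDrapatz} does not apply verbatim, but by \Cref{RemDrapatz} the limiting scale is proportional to $\int_0^\Delta|G_{g_\ast,g}(s)|^{\alpha/2}ds=0$, so the limit is the constant $0$, and convergence of the linear combination to $\sum_h c_h V_h=0$ can be obtained either directly from the vanishing scale or by a continuity argument perturbing $(c_h)$ to a non-degenerate configuration.
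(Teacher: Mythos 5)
Your proposal follows essentially the same route as the paper's own proof: a Cram\'er--Wold reduction, merging the lags into the single kernel $g_\ast(t)=\sum_{h}c_h\,g(t+h\Delta)$ so that \Cref{GenDrapatz} applies to the pair $(g_\ast,g)$, and showing the boundary terms are negligible (the paper bounds their $\delta$-th moment for $\delta<\alpha/2$, while you invoke stationarity of their law --- both arguments work). Your additional remark on the degenerate case where $g_\ast$ vanishes on $[0,\infty)$, so that the hypothesis $0<\int|g_\ast(s)g(s)|\,ds$ of \Cref{GenDrapatz} fails, addresses a point the paper's proof silently passes over, so it is a welcome refinement rather than a deviation.
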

	\begin{proof}
		Let $c_0,\ldots, c_m \in \R$. Then,
        \begin{eqnarray} \label{eq1}
        \frac{n}{n^{2/\alpha}}\left(c_0\overline \gamma_{n}(0)+\ldots+c_m\overline \gamma_n(m)\right)&=&\frac{1}{n^{2/\alpha}}\left(c_0\sum_{j=1}^{n}Y_j^{(\Delta)2}+\ldots+ c_m\sum_{j=1}^{n-m}Y_j^{(\Delta)}Y_{j+m}^{(\Delta)}\right) \nonumber\\ &=&\frac{1}{n^{2/\alpha}}\left(\sum_{j=1}^{n}Y_j^{(\Delta)}\left(\sum_{k=0}^{m}c_kY_{k+j}^{(\Delta)}\right)-\sum_{j=n-m+1}^{n}\sum_{k=n-j+1}^{m}c_kY_j^{(\Delta)}Y_{k+j}^{(\Delta)}\right)\nonumber\\
        &=:&J_n^{[1]}+J_n^{[2]}.
		\end{eqnarray}		
		We obtain for $\delta <\alpha/2$ that
		\begin{eqnarray*}
        \E\left|J_n^{[2]}\right|^{\delta}=\E\left|n^{-2/\alpha}\sum_{j=n-m+1}^{n}\sum_{k=n-j+1}^{m}c_kY_j^{(\Delta)}Y_{k+j}^{(\Delta)}\right|^{\delta}
        \leq n^{-2\delta/\alpha} m^{2\delta} \max_{k=0,\ldots, m}|c_k|^\delta \E\left|Y_1^{(\Delta)}Y_{1+k}^{(\Delta)}\right|^{\delta}
        \overset{n\to \infty}{\longrightarrow}0.
        \end{eqnarray*}
		Therefore, the second term $J_n^{[2]}$ in \eqref{eq1} is negligible. For the first term $J_n^{[1]}$ in \eqref{eq1} we define $$
		Y_{t}^{[1]}:=\int_{-\infty}^{\infty}\sum_{k=0}^{m}c_kg(t+k\Delta-s)d L_s^{(\alpha)} \quad \text{ and } \quad Y_{t}^{([2]}:=\int_{-\infty}^{\infty}g(t-s)d L_s^{(\alpha)}, \quad t\geq 0.$$
		Thereby, we have
        \begin{eqnarray*}
        \sum_{k=0}^{m}c_kY_{k+j}^{(\Delta)}&=&\int_{-\infty}^{\infty}\sum_{k=0}^{m}c_kg((k+j)\Delta-s)d L_s^{(\alpha)}=Y_{j\Delta}^{[1]},\\
		Y_j^{(\Delta)}&=&\int_{-\infty}^{\infty}g(j\Delta-s)d L_s^{(\alpha)}=Y^{[2]}_{j\Delta}.
		\end{eqnarray*}
		An application of
		\Cref{GenDrapatz} leads for $n\to \infty$ to
		\begin{eqnarray*}
		\frac{n}{n^{2/\alpha}}\sum_{k=0}^{m}c_k\overline \gamma_n(k)&=&\frac 1 {n^{2/\alpha}}\sum_{k=1}^{n}Y_{k\Delta }^{[1]} Y_{k\Delta }^{(2)}+J_n^{[2]}\\
        &\overset{\mathcal D}{\longrightarrow}&\int_0^{\Delta}\sum_{j=-\infty}^{\infty}\left(\sum_{k=0}^{m}c_kg(\Delta (k+j)-s)g(\Delta j -s)\right)dL_s^{(\alpha/2)}\\
		&=&\sum_{k=0}^{m}\int_0^{\Delta}\sum_{j=-\infty}^{\infty}c_kg(\Delta (k+j)-s)g(\Delta j -s)dL_s^{(\alpha/2)}.
		\end{eqnarray*}
		Cram\'er-Wold completes the proof.
	\end{proof}

\end{appendix}

{\bibliography{ForschungMayer2}}
\bibliographystyle{abbrvnat}
\end{document}